\numberwithin{equation}{section}
\newtheorem{theorem}{\bf Theorem}[section]
\newtheorem{lemma}[theorem]{\bf Lemma}
\theoremstyle{definition}
\newtheorem{definition}[theorem]{\bf Definition}
\numberwithin{equation}{section}
\let\c@theorem\c@equation
\newcommand{\F}{\mathcal{F}}
\newcommand{\C}{\mathcal{C}}
\newcommand{\K}{\mathcal{K}}
\newcommand{\A}{\mathcal{A}}
\newcommand{\M}{\mathcal{M}}
\newcommand{\E}{\mathcal{E}}
\renewcommand{\phi}{\varphi}
\newcommand{\gen}[1]{\langle #1 \rangle}
\newcommand{\Aut}{\operatorname{Aut}}
\newcommand{\Out}{\operatorname{Out}}
\newcommand{\Hom}{\operatorname{Hom}}
\newcommand{\End}{\operatorname{End}}
\renewcommand{\int}{\operatorname{int}}
\newcommand{\Inn}{\operatorname{Inn}}
\newcommand{\Inndiag}{\operatorname{Inndiag}}
\newcommand{\norm}{\trianglelefteqslant}
\begin{document}

\title{Fusion systems with some sporadic $J$-components}

\author{Justin Lynd}
\address{Institute of Mathematics, University of Aberdeen, Fraser Noble
Building, Aberdeen AB24 3UE} 
\email{justin.lynd@abdn.ac.uk}

\author{Julianne Rainbolt}
\address{Department of Mathematics and Statistics, Saint Louis University, 220
North Grand Blvd., Saint Louis, MO 63103}
\email{rainbolt@slu.edu}

\subjclass[2000]{} 
\keywords{fusion systems, sporadic groups, involution centralizer, components}
\date{\today}
\thanks{The research of the first author was partially supported by NSA Young
Investigator Grant H98230-14-1-0312 and was supported by an AMS-Simons grant
which allowed for travel related to this work.}

\begin{abstract}
Aschbacher's program for the classification of simple fusion systems of ``odd''
type at the prime $2$ has two main stages: the classification of $2$-fusion
systems of subintrinsic component type and the classification of $2$-fusion
systems of $J$-component type. We make a contribution to the latter stage by
classifying $2$-fusion systems with a $J$-component isomorphic to the
$2$-fusion systems of several sporadic groups under the assumption that the
centralizer of this component is cyclic. 
\end{abstract}

\maketitle

\section{Introduction}

The Dichotomy Theorem for saturated fusion systems \cite[II
14.3]{AschbacherKessarOliver2011} partitions the class of saturated $2$-fusion
systems into the fusion systems of characteristic $2$-type and the fusion
systems of component type. This is a much cleaner statement than the
corresponding statement for finite simple groups, and it has a much shorter
proof. In the last few years, M. Aschbacher has begun work on a program to give
a classification of a large subclass of the $2$-fusion systems of component
type.  A memoir setting down the outline and first steps of such a program is
forthcoming \cite{AschbacherFSCT}, but see \cite{AschbacherICCM} for a survey
of some of its contents. The immediate goal is to give a simpler proof of
roughly half of the classification of the finite simple groups by carrying out
most of the work in the category of saturated $2$-fusion systems. 

Let $\F$ be a saturated fusion system over a finite $2$-group $S$, of which the
standard example is the fusion system $\F_S(G)$, where $G$ is a finite group
and $S$ is a Sylow $2$-subgroup of $G$.  A \emph{component} is a subnormal,
quasisimple subsystem. The system is said to be of \emph{component type} if
some involution centralizer in $\F$ has a component. The $2$-fusion systems of
\emph{odd type} consist of those of \emph{subintrinsic component type} and
those of $J$-\emph{component type}. This is a proper subclass of the $2$-fusion
systems of component type. In focusing attention on this restricted class, one
is expected to avoid several difficulties in the treatment of \emph{standard
form} problems like the ones considered in this paper.  By carrying out the
work in fusion systems, it is expected that certain difficulties within
the classification of simple groups of component type can be avoided, including
the necessity of proving Thompson's $B$-conjecture.

We refer to \cite{AschbacherFSCT} for the definition of a fusion system of
subinstrinsic component type, as it is not needed in this paper.  The fusion
system $\F$ is said to be of $J$-\emph{component type} if it is not of
subintrinsic component type, and there is a (fully centralized) involution $x
\in S$ such that the $2$-rank of $C_S(x)$ is equal to the $2$-rank of $S$, and
$C_\F(x)$ has a component. We shall call such a component in an involution
centralizer a $J$-\emph{component}. 

In this paper, we classify saturated $2$-fusion systems having a $J$-component
isomorphic to the $2$-fusion system of $M_{23}$, $J_3$, $McL$, or $Ly$ under
the assumption that the centralizer of the component is a cyclic $2$-group. A
similar problem for the fusion system of $L_2(q)$, $q \equiv \pm 1 \pmod{8}$
was treated in \cite{Lynd2015} under stronger hypotheses.

\begin{theorem}\label{T:main}
Let $\F$ be a saturated fusion system over the finite $2$-group $S$.  Suppose
that $x \in S$ is a fully centralized involution such that $F^*(C_\F(x)) \cong
Q \times \K$, where $\K$ is the $2$-fusion system of $M_{23}$, $J_3$, $McL$, or
$Ly$, and where $Q$ is a cyclic $2$-group. Assume further that $m(C_S(x)) =
m(S)$. Then $\K$ is a component of $\F$. 
\end{theorem}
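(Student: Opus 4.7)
The plan is to show that a Sylow $2$-subgroup $T$ of $\K$ is strongly closed in $S$ with respect to $\F$ and that $\K$ is $N_\F(T)$-invariant, from which $\K$ is subnormal and quasisimple in $\F$, hence a component. Set $\C = C_\F(x)$, so that $F^*(\C) \cong Q \times \K$ with $Q$ cyclic; since $Z(\K) = 1$, $x$ is forced to lie in $Q$ as its unique involution (in particular $Q \ne 1$). Choose $T$ a Sylow $2$-subgroup of $\K$ contained in $C_S(x)$, and put $D = Q \times T$, a Sylow $2$-subgroup of $F^*(\C)$. Since $x$ is fully $\F$-centralized, $C_S(x) \in \Syl_2(\C)$, and the rank condition $m(C_S(x)) = m(S)$ forces every elementary abelian subgroup of $S$ of maximal rank to be $\F$-conjugate into $C_S(x)$. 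This sharply constrains the $\F$-conjugacy class of $x$ to consist of certain $2$-central involutions.

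The core of the argument is to establish strong closure of $T$ via a fusion analysis of the involutions of $D$, which partition into $\{x\}$, the involutions of $T$, and the diagonal involutions $xt$ for $t \in T$ of order $2$. I would rule out $\F$-conjugacies between these classes, case-by-case for $\K$ running over the four sporadic fusion systems. A hypothetical $\phi \in \Hom_\F(\gen{x}, S)$ sending $x$ to some $t \in T$ would yield $F^*(C_\F(t)) \cong (\text{cyclic})\times \K$, incompatible with the intrinsic $2$-local structure inside $\K$: in each case the centralizer in $\K$ of an involution has known shape---$2^4{:}A_7$, $2^{1+4}{:}A_5$, $2\cdot A_8$, $2\cdot A_{11}$ respectively---which is a proper $2$-local subsystem of $\K$ and cannot accommodate a full quasisimple $\K$-factor. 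A similar $2$-rank and normal-structure comparison, together with the fact that $Q$ is cyclic (so $xt$ has no natural cyclic complement in $D$ once twisted), rules out $\F$-fusion to a diagonal $xt'$. Combined with the rank hypothesis, this yields strong closure of $T$.

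Given strong closure of $T$, the final step is to verify $N_\F(T)$-invariance of $\K$: the action of $N_\F(T)/C_\F(T)$ on $T$ embeds into $\Aut(T)$ and must centralize $x$, hence lies in $\Aut_\K(T)\,C_{\Aut(T)}(\K)$; using that $\Out(\K)$ is trivial for $M_{23}, McL, Ly$ and of order $2$ for $J_3$, we conclude $N_\F(T)$ normalizes $\K$. A terminal component theorem from Aschbacher's forthcoming memoir \cite{AschbacherFSCT} then gives that $\K$ is a component of $\F$. The principal obstacle is the strong-closure step: the per-case fusion comparisons require explicit knowledge of the $2$-local geometry of each of $M_{23}, J_3, McL, Ly$, and the argument must work uniformly under the hypotheses that $Q$ is cyclic and $m(C_S(x)) = m(S)$, which together prevent $x$ from being $\F$-fused into $T$ or mixed diagonally with $\K$.
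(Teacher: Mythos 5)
Your approach is not the paper's, and it contains a fatal error at its central step. The paper argues by contradiction: assuming $\K$ is not a component, it shows that $C_S(x) < S$, that $Q = \gen{x}$, and that for a suitable $E = \gen{x}F$ with $F \in \A(T_1)$ the orbit $x^{\Aut_\F(E)}$ equals the full coset $xF$; it then applies a model for $N_\F(E)$ together with Lemma~\ref{L:homocyclic} and Lemma~\ref{L:H1}(d) to manufacture an elementary abelian subgroup of rank $8$, contradicting $m(S) = m(C_S(x)) = 5$. In other words, the rank hypothesis is the engine of a final numerical contradiction, not a tool for controlling conjugacy at the outset. Your plan runs in the opposite direction: you try to prove directly that the Sylow subgroup $T_1$ of $\K$ is strongly closed, in particular that $x$ is never $\F$-fused to a diagonal involution $xt$. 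That is false in exactly the configuration you must exclude: the paper proves (Lemma~\ref{L:xnsimt1}(b)) that once $C_S(x) < S$ --- which is forced whenever $\K$ fails to be a component --- the element $x$ is $N_S(C_S(x))$-conjugate to $xt_1$. The motivating ``bad'' example, the wreath product $(K_1 \times K_2)\gen{x}$, already exhibits this fusion, so no argument from the stated hypotheses can rule it out; the correct proof must instead exploit this fusion to grow the $2$-rank.

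Two further steps are unsupported. First, your mechanism for excluding $x^\phi = t \in T_1$ --- that $F^*(C_\F(t))$ would then contain a copy of $\K$, ``incompatible'' with $C_\K(t)$ being $2^4{:}A_7$, $2^{1+4}{:}A_5$, etc. --- is not a contradiction: $C_\K(t)$ is merely a subsystem of $C_\F(t)$, and nothing prevents $C_\F(t)$ from having a large component while $C_\K(t)$ is a proper $2$-local. (The conclusion $x \not\sim_\F t_1$ is true, but the paper proves it via the action of $\Aut_\F(T)$ on $J(T)$ and the computation of $\Omega_1 Z(S)$, not this way.) Second, the claim that $m(C_S(x)) = m(S)$ forces every maximal-rank elementary abelian subgroup of $S$ to be $\F$-conjugate into $C_S(x)$ is asserted without justification and is not used in the paper. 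Finally, the appeal to an unspecified ``terminal component theorem'' from \cite{AschbacherFSCT} to close the argument replaces the substance of the problem with a citation; the paper needs no such result because its contradiction terminates the proof outright.
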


Here $F^*(C_\F(x))$ is the generalized Fitting subsystem of the centralizer
$C_\F(x)$ \cite{AschbacherGeneralized}, and $m(S) := m_2(S)$ is the $2$-rank of
$S$ -- that is, the largest rank of an elementary abelian $2$-subgroup of $S$.
We mention that any fusion system having an involution centralizer with a
component isomorphic to $McL$ or $Ly$ is necessarily of subintrinsic component
type by \cite[6.3.5]{AschbacherFSCT}. This means that, when restricted to those
components, Theorem~\ref{T:main} gives a result weaker than is needed to fit
into the subintrinsic type portion of Aschbacher's program.  However, we have
included $McL$ and $Ly$ here because our arguments apply equally well in each
of the four cases.

There is no almost simple group with an involution centralizer having any of
these simple groups as a component, but the wreath product $G = (K_1 \times
K_2)\gen{x}$ with $K_1^x = K_2$ always has $C_G(x) = \gen{x} \times K$ with $K$
a component that is diagonally embedded in $K_1 \times K_2$. The strategy for
the proof of Theorem~\ref{T:main} is to locate a suitable elementary abelian
subgroup $F$ in the Sylow $2$-subgroup of $\K$, and then to show that the
normalizer in $S$ of $E := \gen{x}F$ has at least twice the rank as that of
$F$. Thus, the aim is to force a resemblance with the wreath product, in which
$N_G(\gen{x}F)$ modulo core is an extension of $F_1 \times F_2$ (with $F_i$ the
projection of $F$ onto the $i$th factor) by $\gen{x} \times \Aut_K(F)$. 
Lemma~\ref{L:homocyclic} is important for getting control of the extension of
$E$ determined by $N_\F(E)$ in order to carry out this argument. 

\subsection*{Acknowledgements} We would like to thank the Department of
Mathematics and Statistics at Saint Louis University and the Departments of
Mathematics at Rutgers University and Ohio State University for their
hospitality and support during mutual visits of the authors. We would also like
to thank R. Solomon and R. Lyons for helpful discussions, and an anonymous
referee for their comments and suggestions.

\section{Background on fusion systems}\label{S:background}

We assume some familiarity with notions regarding saturated fusion systems as
can be found in \cite{AschbacherKessarOliver2011} or \cite{CravenTheory},
although some items are recalled below. Most of our notation is standard.

Whenever $G$ is a group, we write $G^\#$ for the set of nonidentity elements of
$G$. If we wish to indicate that $G$ is a split extension of a group $A \norm
G$ by a group $B$, then we will write $G = A \cdot B$.  For $g \in G$, denote
by $c_g$ the conjugation homomorphism $c_g\colon x \mapsto x^g$ and its
restrictions.  Morphisms in fusion systems are written on the right and in the
exponent.  That is, we write $x^\phi$ (or $P^\phi$) for the image of an element
$x$ (or subgroup $P$) of $S$ under a morphism $\phi$ in a fusion system, by
analogy with the more standard exponential notation for conjugation in a group.

\subsection{Terminology and basic properties}\label{SS:basic}
Throughout this section, fix a saturated fusion system $\F$ over the $p$-group
$S$.  We will sometimes refer to $S$ as the \emph{Sylow subgroup} of $\F$.  For
a subgroup $P \leq S$, we write $\Aut_\F(P)$ for $\Hom_{\F}(P,P)$, and
$\Out_\F(P)$ for $\Aut_\F(P)/\Inn(P)$. Whenever two subgroups or elements of
$S$ are isomorphic in $\F$, we say that they are $\F$-\emph{conjugate}.  Write
$P^\F$ for the set of $\F$-conjugates of $P$. If $\E$ is a subsystem of $\F$ on
the subgroup $T \leq S$ and $\alpha\colon T \to S$ is a morphism in $\F$, the
\emph{conjugate} of $\E$ by $\alpha$ is the subsystem $\E^\alpha$ over
$T^\alpha$ with morphisms $\phi^\alpha := \alpha^{-1}\phi\alpha$ for $\phi$ a
morphism in $\E$.

We first recall some of the terminology for subgroups and common subsystems in
a fusion system.  
\begin{definition}\label{D:basic}
Fix a saturated fusion system over the $p$-group $S$, and let $P \leq S$. Then
\begin{itemize}
\item $P$ is \emph{fully $\F$-centralized} if $|C_S(P)| \geq |C_S(Q)|$ for all $Q \in P^\F$,
\item $P$ is \emph{fully $\F$-normalized} if $|N_S(P)| \geq |N_S(Q)|$ for all $Q \in P^\F$,
\item $P$ is $\F$-\emph{centric} if $C_S(Q) \leq Q$ for all $Q \in P^\F$,
\item $P$ is $\F$-\emph{radical} if $O_p(\Out_\F(P)) = 1$, 
\item $P$ is \emph{weakly $\F$-closed} if $P^\F = \{P\}$,
\item the \emph{centralizer} of $P$ in $\F$ is the fusion system $C_\F(P)$ over
$C_S(P)$ with morphisms those $\phi \in \Hom_\F(Q, R)$ such that there is an
extension $\tilde{\phi} \in \Hom_{\F}(PQ, PR)$ that restricts to the identity
on $P$,
\item the \emph{normalizer} of $P$ in $\F$ is the fusion system $N_\F(P)$ over
$N_S(P)$ with morphisms those $\phi \in \Hom_\F(Q, R)$ such that there is an
extension $\tilde{\phi}\colon PQ \to PR$ in $\F$ such that $P^{\tilde{\phi}} = P$. 
\end{itemize}
\end{definition}

We write $\F^f$ and $\F^c$ for the collections of fully $\F$-normalized and
$\F$-centric, respectively, and we write $\F^{fc}$ for the intersection of
these two collections.

Sometimes we refer to an \emph{element} $x$ of $S$ as being fully
$\F$-centralized, when we actually mean that the group $\gen{x}$ is fully
$\F$-centralized, especially when $x$ is an involution.  For example, this was
done in the the statement of the theorem in the introduction.

Whenever $P \leq S$, we write $\mathfrak{A}(P)$ for the set of $\alpha \in
\Hom_\F(N_S(P), S)$ such that $P^\alpha$ is fully $\F$-normalized. 

\begin{lemma}\label{L:fn}
For each $P \leq S$, $\mathfrak{A}(P)$ is not empty. Moreover, for each $Q \in
P^\F \cap \F^f$, there is $\alpha \in \mathfrak{A}(P)$ with $P^\alpha = Q$.
\end{lemma}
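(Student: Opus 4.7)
The plan is to invoke the two standard saturation axioms: the ``Sylow axiom'' that for $Q \in P^\F \cap \F^f$, $\Aut_S(Q)$ is a Sylow $p$-subgroup of $\Aut_\F(Q)$; and the ``extension axiom'' that every $\F$-morphism $\phi \colon P \to Q$ with $Q$ fully normalized and $\phi^{-1}\Aut_S(P)\phi \le \Aut_S(Q)$ extends to a morphism on $N_\phi$, where
\[
N_\phi = \{g \in N_S(P) : \phi^{-1}c_g\phi \in \Aut_S(Q)\}.
\]
Non-emptiness of $\mathfrak{A}(P)$ will be deduced from the stronger second assertion once we know that $P^\F \cap \F^f$ is non-empty, which follows from the finiteness of $S$ (any $Q \in P^\F$ maximizing $|N_S(Q)|$ lies in $\F^f$).

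For the main statement, fix $Q \in P^\F \cap \F^f$ and choose an $\F$-isomorphism $\phi_0 \colon P \to Q$. Then $\phi_0^{-1}\Aut_S(P)\phi_0$ is a $p$-subgroup of $\Aut_\F(Q)$. Since $Q$ is fully $\F$-normalized, $\Aut_S(Q) \in \Syl_p(\Aut_\F(Q))$, so by Sylow's theorem inside $\Aut_\F(Q)$ there exists $c \in \Aut_\F(Q)$ with
\[
c^{-1}\bigl(\phi_0^{-1}\Aut_S(P)\phi_0\bigr) c \le \Aut_S(Q).
\]
Set $\phi := \phi_0 c$; this is still an $\F$-isomorphism from $P$ to $Q$, and now $\phi^{-1}\Aut_S(P)\phi \le \Aut_S(Q)$. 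This inclusion is precisely the statement that every $g \in N_S(P)$ lies in $N_\phi$, i.e.\ $N_\phi = N_S(P)$.

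By the extension axiom, $\phi$ extends to an $\F$-morphism $\alpha \colon N_S(P) \to S$, and by construction $P^\alpha = P^\phi = Q$ is fully $\F$-normalized, so $\alpha \in \mathfrak{A}(P)$. Specializing to any $Q \in P^\F \cap \F^f$ gives the ``moreover'' clause, and in particular shows $\mathfrak{A}(P) \ne \varnothing$. I do not anticipate a real obstacle here: the only subtle point is remembering that the extension axiom requires the Sylow condition on $\phi^{-1}\Aut_S(P)\phi$, which is exactly what the preliminary Sylow-conjugation step in $\Aut_\F(Q)$ arranges.
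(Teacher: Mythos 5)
Your argument is correct, and it is in fact the standard proof of this standard fact; the paper itself does not argue at all but simply cites \cite[A.2(b)]{BrotoLeviOliver2003} (applied with $K = \Aut(P)$), and the proof given there is essentially the one you wrote: conjugate $\phi_0^{-1}\Aut_S(P)\phi_0$ into $\Aut_S(Q)$ by Sylow's theorem in $\Aut_\F(Q)$ (possible because $Q$, being fully normalized, is fully automized), observe that this forces $N_\phi = N_S(P)$, and then apply the extension axiom (receptivity of the fully normalized, hence fully centralized, subgroup $Q$). The only cosmetic remark is that the extension axiom does not need the hypothesis $\phi^{-1}\Aut_S(P)\phi \leq \Aut_S(Q)$ as part of its statement --- it always extends $\phi$ to $N_\phi$, and your Sylow step is exactly what makes $N_\phi$ equal to all of $N_S(P)$ --- but the way you have packaged it is logically sound and loses nothing. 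So there is no gap; you have simply supplied the proof that the paper delegates to a reference.
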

\begin{proof}
This is \cite[A.2(b)]{BrotoLeviOliver2003}, applied with $K = \Aut(P)$. 
\end{proof}

By a result of Puig, the centralizer $C_\F(P)$ is saturated if $P$ is fully
$\F$-centralized, and the normalizer $N_\F(P)$ is saturated if $P$ is fully
$\F$-normalized. We write $O_p(\F)$ for the (unique) largest subgroup $P$ of
$S$ satisfying $N_\F(P) = \F$, and $Z(\F)$ for the (unique) largest subgroup
$P$ of $S$ satisfying $C_\F(P) = \F$. We note that if $\F = \F_S(G)$ for some
finite group $G$ with Sylow $p$-subgroup $S$, then $O_p(G) \leq S$ is normal in
$\F$ so that $O_p(G) \leq O_p(\F)$, but the converse does not hold in general.

\subsection{The Model Theorem}\label{SS:model} A subgroup $P \leq S$ is
$\F$-centric if and only if $C_S(P) \leq P$ and $P$ is fully $\F$-centralized
\cite[I.3.1]{AschbacherKessarOliver2011}.  If $P$ is $\F$-centric and fully
$\F$-normalized, then the normalizer fusion system $\M := N_\F(P)$ is
\emph{constrained} -- that is, $O_p(\M)$ is $\M$-centric. By the Model Theorem
\cite[Proposition~III.5.10]{AschbacherKessarOliver2011}, there is then a unique
finite group $M$ up to isomorphism having Sylow $p$-subgroup $N_S(P)$ and such
that $O_{p'}(M) = 1$, $O_p(M) = O_p(\M)$, and $\F_{N_S(P)}(M) \cong \M$. Then
$M$ is said to be a \emph{model} for $\M$ in this case.

\subsection{Tame fusion systems}\label{SS:tame}

The main hypothesis of Theorem~\ref{T:main}
is that the generalized Fitting subsystem of the involution centralizer $\C$ is
the fusion system of a finite group $Q \times K$, where $Q$ is a cyclic
$2$-group, and $K$ is simple. In this situation, $C_\F(x)$ is itself the fusion
system of a finite group $C$ with $F^*(C) = Q \times K$, where $K \cong M_{23}$,
$McL$, 
$J_3$, or $Ly$, since each of these simple groups \emph{tamely realizes} its
$2$-fusion system \cite{AOV2012, OliverTameSpor}.  Roughly, a finite group
tamely realizes its fusion system if every automorphism of its fusion system is
induced by an automorphism of the group. Moreover, a fusion system is said to
be \emph{tame} if there is some finite group that tamely realizes it. We refer
to \cite{AOV2012} for more details. 

The importance of tameness in the context of standard form problems was pointed
out in \cite[\S\S1.5]{Lynd2015}. The discussion there is centered around the
notion of strong tameness, which was needed for proofs of the results of
\cite{AOV2012}, but the contents of \cite{Oliver2013, GlaubermanLynd2016} imply
that a fusion system is tame if and only if it is strongly tame.  Recently,
Oliver has established the following useful corollary of the results in
\cite{AOV2012}, which we state for our setup here. 

\begin{theorem}[{\cite[Corollary~2.5]{OliverReductions}}]
\label{T:tame}
Let $\C$ be a saturated fusion system over a $2$-group. Assume that $F^*(\C) =
O_2(\C)\K$, where $\K$ is simple and tamely realized by a finite simple group
$K$. Then $\C$ is tamely realized by a finite group $C$ such that $F^*(C) =
O_2(C)K$. 
\end{theorem}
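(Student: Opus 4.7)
The plan is to construct a candidate finite group $C$ with $F^*(C) = O_2(C)K$ realizing $\C$, and then verify tameness by lifting every fusion automorphism of $\C$ to a group automorphism of $C$, exploiting the strong tameness of $K$.

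First, by the combination of results quoted in the preceding discussion, $K$ strongly tamely realizes $\K$: the natural map $\kappa_K \colon \Out(K) \to \Out(\K)$ is split surjective. The normal subsystem $\K \norm \C$ induces an outer action $\Out_\C(\K) \leq \Out(\K)$. Lift this subgroup through a chosen splitting of $\kappa_K$ to obtain $\bar A \leq \Out(K)$, and choose $A \leq \Aut(K)$ with image $\bar A$ in $\Out(K)$. Put $L = \Inn(K)\cdot A \leq \Aut(K)$; this finite group has the property that $\F_T(L)$, for $T$ a Sylow $2$-subgroup of $L$, realizes the fusion-theoretic extension of $\K$ by $\Out_\C(\K)$ prescribed by $\C$ modulo $O_2(\C)$.

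Second, build $C$ as an extension of $L$ by $O_2(\C)$. The conjugation action in $\C$ prescribes how $K$, and hence (after the first step) all of $L$, acts on $O_2(\C)$. Combining these data, form $C$ as the split extension $O_2(\C)\rtimes L$ if possible, or the appropriate twisted extension otherwise, and check $F^*(C) = O_2(C)K$. Verify $\F_S(C) \cong \C$ by Alperin's fusion theorem applied on both sides: for $\F$-centric radical subgroups $P$ containing $O_2(\C)$, the automizer $\Aut_\C(P)$ agrees with $\Aut_C(P)$ by construction of the $L$-action, while for those $P$ not containing $O_2(\C)$, the fusion reduces to fusion inside $K$, which realizes $\K$ by hypothesis.

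Third, verify $C$ tamely realizes $\C$. Any $\alpha \in \Aut(\C)$ preserves the unique component $\K$ (characteristic in $F^*(\C)$), hence restricts to $\alpha_\K \in \Aut(\K)$, which lifts via strong tameness to $\hat\alpha \in \Aut(K)$. Extending $\hat\alpha$ to $\Aut(L)$ (possibly after adjusting by an inner automorphism so as to normalize $A$) and coupling with the $\alpha$-induced map on $O_2(\C)$ yields the required lift to $\Aut(C)$. The main obstacle is the gluing in step two: arranging the $L$-action on $O_2(\C)$ together with the correct extension class so that the resulting $\F_S(C)$ matches $\C$ on the nose rather than up to an outer twist. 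This reduces to vanishing of obstruction classes in $H^2\bigl(L/K;Z(O_2(\C))^K\bigr)$ and related cohomology groups, and is precisely the content of the reduction machinery of \cite{AOV2012}, now known (by \cite{Oliver2013, GlaubermanLynd2016}) to be available for any tame simple fusion system.
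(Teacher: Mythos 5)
The paper does not prove this statement: it is quoted verbatim as Corollary~2.5 of Oliver's paper on reductions of fusion systems, so there is no internal argument to compare yours against. Judged on its own, your outline has the right general shape (realize $\C/O_2(\C)$ inside $\Aut(K)$ using strong tameness of $K$, then rebuild $\C$ as an extension by $O_2(\C)$), but two gaps are genuine.

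First, tame realization demands that $\kappa_C\colon \Out(C)\to\Out(\F_S(C))$ be \emph{split} surjective. Your third step lifts each $\alpha\in\Aut(\C)$ individually to $\Aut(C)$; even granting every such lift, this gives at most surjectivity of $\kappa_C$, and you never address choosing the lifts coherently so that some subgroup of $\Out(C)$ maps isomorphically onto $\Out(\C)$. The adjustments ``by an inner automorphism so as to normalize $A$'' are precisely the non-canonical choices that can obstruct a splitting, and the kernel of $\kappa_C$, which can be nontrivial even when $\kappa_K$ splits, is not discussed at all.

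Second, the entire content of the theorem is concentrated in your second step, and there you write ``form $C$ as the split extension \dots or the appropriate twisted extension otherwise'' and then assert that the required vanishing ``is precisely the content of the reduction machinery of \cite{AOV2012}.'' That machinery --- the theorems asserting that suitable extensions of tamely realized fusion systems are again tamely realized --- essentially \emph{is} the statement to be proved; invoking it wholesale turns the argument into a citation in disguise, which is what the paper itself does more transparently by quoting Oliver. An independent proof would have to actually construct $C$ with $\F_S(C)\cong\C$ on the nose (not merely up to an outer twist), identify $F^*(C)$, and show the relevant obstruction groups vanish rather than name them. There is also a smaller imprecision at the start: $F^*(\C)=O_2(\C)\K$ is in general a central product rather than the clean semidirect-product picture your gluing step presupposes.
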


Note that, upon application of Theorem~\ref{T:tame} to the involution
centralizer $\C = C_\F(x)$ in Theorem~\ref{T:main}, we have $O_2(C) = Q =
O_2(\C)$. Indeed, $O_2(C) \leq Q$ since $O_2(C)$ is normal in $\C$, and one
sees that $Q = C_S(K)$ by combining Lemma~1.12(c) of \cite{Lynd2015} with
Lemma~\ref{L:sporautprelim}(c) below. However, $O_2(C)$ is normal and
self-centralizing in $C_C(K)$ by properties of the generalized Fitting
subgroup, so that $C_C(K)/O_2(C)$ is a group of outer automorphisms of the
cyclic $2$-group $O_2(C)$, and so is itself a $2$-group. It follows that
$C_C(K) = C_S(K)$ is a normal $2$-subgroup of $C$ (since $K \trianglelefteq
C$), and hence $Q = C_S(K) \leq O_2(C)$. 

Thus, the effect of Theorem~\ref{T:tame} for our purposes is that we may work
in the group $C$, where $Q$ is a normal subgroup.  In particular, in the setup
of the Theorem~\ref{T:main}, the quotient $C/Q$ is isomorphic to a subgroup of
$\Aut(K)$ containing $\Inn(K)$, where $K$ is one of the simple groups appearing
in Theorem~\ref{T:main}. 

\section{Structure of the components}\label{S:structure}

In this section, we recall some properties of the simple systems appearing in
Theorem~\ref{T:main} that are required for the remainder.

\begin{lemma}\label{L:H1}
Let $G$ be $A_7$ or $GL_2(4)$, and $V$ a faithful ${\bf F}_2[G]$-module of
dimension $4$. Then 
\begin{enumerate}
\item[(a)] $G$ acts transitively on the nonzero vectors of $V$,
\item[(b)] $C_{GL(V)}(G) \leq G$,
\item[(c)] $H^1(G,V) = 0$, and 
\item[(d)] if $G$ acts on a homocyclic $2$-group $Y$ with $\Omega_1(Y) = V$,
then $Y = V$. 
\end{enumerate}
\end{lemma}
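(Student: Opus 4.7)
My plan is to verify each of the four properties in turn, with the substantive content lying in (c) and (d). Parts (a) and (b) are case-by-case structural facts. For (a): the $GL_2(4)$-case is immediate from the $\mathbf{F}_4$-structure on the natural module, and the $A_7$-case follows from the embedding $A_7 < A_8 \cong GL_4(2)$ as a point stabilizer, which realizes the standard $2$-transitive action on $15$ objects. For (b): transitivity in (a) gives irreducibility, so by Schur's lemma $C_{GL(V)}(G) = \End_{\mathbf{F}_2[G]}(V)^\times$, trivial in the $A_7$-case (absolute irreducibility) and equal to $\mathbf{F}_4^\times = Z(GL_2(4)) \leq G$ in the $GL_2(4)$-case. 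Part (c) is a standard cohomology computation for each pair $(G, V)$ to be cited from the literature.

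For (d), I plan to assume $Y \neq V$ and derive a contradiction. By replacing $Y$ with $2^{k-2}Y$ if $Y$ has exponent $2^k \geq 4$, I reduce to $Y$ homocyclic of exponent exactly $4$, so that $V = 2Y$ and multiplication by $2$ gives a $G$-equivariant short exact sequence $0 \to V \to Y \to V \to 0$ of abelian groups. Fixing $v_0 \in V^\#$, the affine action of the stabilizer $G_{v_0}$ on the fiber $[2]^{-1}(v_0)$ has its obstruction to a fixed point in $H^1(G_{v_0}, V)$; this vanishes by direct verification in each case ($G_{v_0} \cong L_3(2)$ for $A_7$, where $V$ restricts as a split extension of the natural $3$-dimensional module by a trivial line, giving $H^1(L_3(2), V) = 0$; and $G_{v_0} \cong \mathrm{AGL}_1(4) \cong A_4$ for $GL_2(4)$, where a direct computation suffices). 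Combined with $G$-transitivity on $V^\#$, this yields a $G$-equivariant set-theoretic section $\sigma\colon V \to Y$ of multiplication by $2$ with $\sigma(0) = 0$.

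From $\sigma$, the symmetric $G$-equivariant cocycle $c(v,w) := \sigma(v) + \sigma(w) - \sigma(v+w)$ takes values in $V$ and satisfies $c(v,v) = 2\sigma(v) = v$; equivalently, $c$ provides a $G$-equivariant retraction of the Frobenius-type inclusion $V \hookrightarrow \mathrm{Sym}^2 V$ sending $v \mapsto v \otimes v$. This forces a $G$-equivariant splitting of the short exact sequence $0 \to V \to \mathrm{Sym}^2 V \to \Lambda^2 V \to 0$ in characteristic $2$. The contradiction then comes from verifying that this sequence does not split $G$-equivariantly for either choice of $G$ and $V$; this non-splitness, the main obstacle of the proof, requires explicit analysis of the composition factors of $\mathrm{Sym}^2 V$. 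For $A_7$ one can restrict from the analogous non-split sequence for $A_8 \cong GL_4(2)$; for $GL_2(4)$ the $\mathbf{F}_4$-structure gives a direct computation ruling out a retraction.
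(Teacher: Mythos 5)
Parts (a)--(c) of your proposal are essentially sound: (a) and (b) match the paper's arguments in spirit, and (c) is a true fact, though you defer it entirely to an unspecified citation where the paper gives actual arguments (coprime action for $GL_2(4)$, and an Alperin--Gorenstein generation argument for $A_7$). The real problems are in (d). Your vanishing claim for $H^1(G_{v_0},V)$ in the $A_7$ case rests on a false premise: the restriction of $V$ to the vector stabilizer $G_{v_0}\cong L_3(2)$ is \emph{not} decomposable. The very $2$-transitivity of $A_7$ on the $15$ nonzero vectors that you invoke in part (a) forces $G_{v_0}$ to be transitive on the remaining $14$ vectors, so it cannot leave invariant a $3$-dimensional complement to $\langle v_0\rangle$ (whose $7$ nonzero vectors would be an invariant subset); hence $V|_{L_3(2)}$ is an indecomposable, non-split extension of the natural $3$-dimensional module $W$ by a trivial line. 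Worse, even granting the splitting, the inference would fail, because $H^1(L_3(2),W)\cong{\bf F}_2\neq 0$ is one of the classical exceptional $1$-cohomology groups (its nonvanishing is exactly what the indecomposability just described witnesses, after dualizing). The group $H^1(L_3(2),V|_{L_3(2)})$ does in fact vanish for the true, indecomposable restriction, but seeing that requires a genuine computation with the Loewy structure of the projective cover of the trivial module (e.g.\ $\Omega(k)=\operatorname{rad}P(k)$ has simple socle $k$ and head $W\oplus W^{*}$, whence $\Hom(\Omega(k),V)=0$), not the argument you give.

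Second, even with a $G$-equivariant section $\sigma$ in hand, the function $c(v,w)=\sigma(v)+\sigma(w)-\sigma(v+w)$ is a symmetric $2$-cocycle but need not be \emph{biadditive}: the defect $c(v+w,u)-c(v,u)-c(w,u)$ is the third finite difference of $\sigma$, which has no reason to vanish for an equivariant set-theoretic section. So $c$ does not induce a homomorphism $\mathrm{Sym}^2V\to V$, the claimed retraction of $v\mapsto v\otimes v$ is not produced, and the reduction to the (in any case unproved) non-splitness of $0\to V\to\mathrm{Sym}^2V\to\Lambda^2V\to 0$ does not go through. The natural universal receptacle for the data of an exponent-$4$ homocyclic cover is Whitehead's divided square $\Gamma_2(V)$, which contains elements of order $4$ and is not $\mathrm{Sym}^2V$; equivalently one can phrase the obstruction as $d_2(\id_V)\in H^2(G,\End(V))$ in the universal-coefficients sequence for $\operatorname{Ext}^1_{{\mathbb Z}G}(V,V)$. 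The paper sidesteps all of this by quoting a theorem of G.~Higman: if $SL_2(4)$ acts faithfully on a homocyclic $2$-group on which some element of order $3$ acts without nonzero fixed points, the group is elementary abelian. It then simply locates such an $SL_2(4)\cong A_5$ inside each of $GL_2(4)$ and $A_7$. I would recommend adopting that route, or else reworking (d) carefully around the $\Gamma_2$/obstruction formulation.
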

\begin{proof}
In each case, $V$ is irreducible. There is a unique such module for $GL_2(4)
\cong C_3 \times A_5$, namely the natural ${\bf F}_4[G]$-module considered as a
module over ${\bf F}_2$, and thus (a) holds in this case.  The module for $A_7$
is unique up to taking duals; clearly points (a) and (b) are independent of the
choice between these two modules, and (c) is independent of such a choice by
\cite[\S 17]{AschbacherFGTSecond}. Note that $A_7$ acts transitively on $V^\#$, which
can be seen by noting that a Sylow $7$-subgroup acts with exactly one fixed
point on $V^\#$, and a Sylow $5$-subgroup acts with no fixed points.  Point (b)
holds for $G = A_7$ by absolute irreducibility.  Similarly for $G = GL_2(4)$,
one has that $C_{GL(V)}(G) = \End_{{\bf F}_2[G]}(V)^\times = {\bf F}_4^\times$,
and so (b) follows in this case as $Z(G) \cong C_3$.  Point (c) for $G =
GL_2(4)$ holds because, by coprime action, $Z(G)$ (and so $G$) has a fixed
point on any $5$-dimensional module containing $V$ as a submodule (see \cite[\S
17]{AschbacherFGTSecond}).  Point (c) for $G = A_7$ holds, for example, by applying
\cite{AlperinGorenstein1972}  with $\mathscr{L} = \{L_0, L_1, L_2\}$, where
$L_1 = C_G((1,2,3))$, $L_2 = C_G((4,5,6))$, and $L_0 = L_1 \cap L_2$. The $L_i$
indeed satisfy the hypotheses of that theorem: $H^0(L_i, V) = 0$ because each
Sylow $3$-subgroup of $GL_4(2) \geq G$ has no nontrivial fixed point on $V$,
and $H^1(L_i,V) = 0$ using a similar argument via coprime action as above. 

We now turn to (d), which follows from a special case of a result of G. Higman
\cite[Theorem~8.2]{Higman1968}. This says that if $SL_2(4)$ acts faithfully on
a homocyclic $2$-group $Y$ in which an element of order $3$ acts without fixed
points on $Y^\#$, then $Y$ is elementary abelian. In case $G = GL_2(4)$, $V$ is
the natural module for $G$ and certainly $SL_2(4) \leq G$ has (with respect to
an appropriate basis) a diagonal element of order $3$ acting without fixed
points. In the case $G = A_7$, we have $G \leq A_8 \cong GL_4(2)$, and the
action of $G$ on $V$ is the restriction of the natural (or dual) action of
$GL_4(2)$.  Restriction of either one to $GL_2(4) \cong C_3 \times SL_2(4)
\cong C_3 \times A_5$ shows that $SL_2(4)$ is embedded in $G$ as an $A_5$
moving $5$ points in the natural permutation action. So $SL_2(4)$ is contained
in $G$ up to conjugacy, and as before, it has an element of order $3$ acting
without fixed points on $V^\#$.  Hence, (d) holds in this case as well by
Higman's Theorem.
\end{proof}

For a vector space $E$ over the field with two elements, the next lemma
examines under rather strong hypotheses the structure of extensions of $E$ by
certain subgroups of the stabilizer in $GL(E)$ of a hyperplane.

\begin{lemma}\label{L:homocyclic}
Let $E \cong E_{2^{n+1}}$ ($n \geq 3$), $A = \Aut(E)$, $V \leq E$ with $V \cong
E_{2^{n}}$, $P = N_A(V)$, and $U = O_2(P)$. Let $L$ be a complement to $U$ in
$P$ acting decomposably on $E$, $x \in E - V$ the fixed point for the action of
$L$, and $G \leq L$. Let $H$ be an extension of $E$ by $UG$ with the given
action and let $X$ be the preimage in $H$ of $U$ under the quotient map $H \to
UG$. Assume that 
\begin{enumerate}
\item[(a)] $G$ acts transitively on $V^\#$; 
\item[(b)] $C_{GL(V)}(G) \leq G$; and
\item[(c)] $H^1(G,V) = 0$.
\end{enumerate}
Then there is a subgroup $Y$ of $X$ that is elementary abelian or homocyclic of
order $2^{2n}$ and a $G$-invariant complement to $\gen{x}$ in $X$.
\end{lemma}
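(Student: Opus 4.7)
The plan is to produce $Y$ as the preimage in $X$ of a $G$-invariant complement to $\langle xV \rangle$ in $X/V$, first showing $X/V$ is elementary abelian and then analyzing the structure of $Y$.

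First I would observe $V \leq Z(X)$ (since $U$ fixes $V$ pointwise and $E$ is abelian), so $X/V$ is a central extension of $U = X/E$ by $\langle xV \rangle \cong \mathbb{F}_2$. The squaring map $q \colon U \to \langle xV \rangle$, $q(u) = \tilde u^2 V$, is well-defined and $G$-invariant, and in the central extension $X/V$ the standard identity $(\tilde u_1 \tilde u_2)^2 = \tilde u_1^2 \tilde u_2^2 [\tilde u_2, \tilde u_1]$ (valid modulo $V$ because $[\tilde u_2, \tilde u_1]$ is central there) shows that $q$ is a quadratic form whose polarization equals the bilinear commutator form $b(u_1, u_2) = [\tilde u_1, \tilde u_2] V$. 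Hypothesis (a) forces $q$ to be constant on $U^\#$, so $q \in \{0, 1_{U^\#}\}$. For $n \geq 3$ the indicator $1_{U^\#}$ cannot arise as $q$: its would-be polarization $B(u, w) = 1_{u+w \ne 0} + 1_{u \ne 0} + 1_{w \ne 0}$ fails bilinearity, since for any three linearly independent $u_1, u_2, u_3 \in U$ one has $B(u_1+u_2, u_3) = 1$ while $B(u_1, u_3) + B(u_2, u_3) = 0$. Hence $q = 0$, so $X/V$ is elementary abelian and $[X, X] \leq V$.

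Next I would $G$-equivariantly split the exact sequence $0 \to \langle xV \rangle \to X/V \to U \to 0$ of $\mathbb{F}_2[G]$-modules. The obstruction sits in $\mathrm{Ext}^1_G(U, \mathbb{F}_2) \cong H^1(G, U^*)$, and via the canonical $G$-equivariant isomorphism $U \cong V$ sending $u \mapsto v_u$ this becomes $H^1(G, V^*)$. This vanishes in the relevant cases: either $V \cong V^*$ and hypothesis (c) applies directly (the $GL_2(4)$ case), or the argument of Lemma~\ref{L:H1}(c) carries over verbatim to $V^*$ (the $A_7$ case). Letting $\bar T$ be the resulting $G$-invariant complement and $Y$ its preimage in $X$, one has $|Y| = 2^{2n}$, $Y \cap \langle x \rangle = 1$, $Y \langle x \rangle = X$, and $Y$ is $G$-invariant, so $Y$ is a $G$-invariant complement to $\langle x \rangle$ in $X$.

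To finish I would determine the isomorphism type of $Y$. Since $Y/V$ is abelian and $V \leq Z(Y)$, the commutators of elements of $Y$ land in $V$ and assemble into a $G$-equivariant alternating bilinear map $\tilde b \colon \wedge^2 (Y/V) \to V$, that is, an element of $\Hom_G(\wedge^2 V, V)$. I would argue this Hom vanishes by inspection of the two cases of interest: for $G = A_7$, $\wedge^2 V$ is the $6$-dimensional irreducible $\mathbb{F}_2[G]$-module, distinct in dimension from $V$; for $G = GL_2(4)$, extending scalars to $\mathbb{F}_4$ decomposes $\wedge^2 V \otimes \mathbb{F}_4$ as $\mathbf{1} \oplus \det \oplus \det^{-1} \oplus \mathfrak{sl}$, none of whose constituents appears in $V \otimes \mathbb{F}_4 = V_0 \oplus V_1$. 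Hence $\tilde b = 0$, so $Y$ is abelian. Now $Y^2 \leq V$ is $G$-invariant, and $V$ is $G$-irreducible by (a), so $Y^2 \in \{1, V\}$; these cases give $Y \cong (C_2)^{2n}$ (elementary abelian) and, by a cyclic-factor count using $\exp(Y) \mid 4$, $Y \cong (C_4)^n$ (homocyclic of exponent $4$), respectively. The hardest step will be the vanishing $\Hom_G(\wedge^2 V, V) = 0$: it does not appear to follow from hypotheses (a)--(c) alone and requires explicit $\mathbb{F}_2[G]$-module information supplied by the specific $G$'s in Lemma~\ref{L:H1}.
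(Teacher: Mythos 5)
Your first two steps track the paper's proof closely. The paper also first shows $\bar X = X/V$ is elementary abelian (arguing that otherwise $\bar X$ would be extraspecial with a $G$-invariant squaring map, impossible by transitivity on $(X/E)^\#$ and $n\ge 3$ --- your polarization computation is the same obstruction made explicit), then invokes (c) to produce a $G$-invariant complement $\bar Y$ to $\langle\bar x\rangle$ and takes $Y$ to be its preimage. Your observation that the splitting obstruction really lives in $H^1(G,V^*)$ rather than $H^1(G,V)$ is a fair one; the paper elides this, and it is harmless in the application only because Lemma~\ref{L:H1}(c) is established for both $V$ and its dual. Your endgame (irreducibility of $V$ forces $\mho^1(Y)\in\{1,V\}$, hence $Y$ elementary abelian or homocyclic $(C_4)^n$) also matches the paper.

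The genuine divergence, and the gap, is the proof that $Y$ is abelian. The lemma is stated for an arbitrary $G\le L$ satisfying (a)--(c), and the conclusion is supposed to follow from those hypotheses alone; your argument instead imports case-specific $\mathbb{F}_2[G]$-module facts, namely $\Hom_G(\wedge^2 V,V)=0$ for $G=A_7$ and $GL_2(4)$, which, as you yourself concede, do not follow from (a)--(c). Note that your proof never uses hypothesis (b) --- that is the tell that an intended idea is missing. The paper's argument runs: if $Y$ is nonabelian, then (a) and the $G$-isomorphism $[x,-]\colon X/E\to V$ force $V=[Y,Y]=\mho^1(Y)=Z(Y)$; composing the squaring map $\bar Y\to V$ with the inverse of $[x,-]$ produces a $G$-equivariant automorphism $\xi$ of $V$, which by (b) equals $\rho(g)$ for some $g\in G$; then $y^{gx}=y^2y^g$ for all $y\in Y$, and expanding $(wy)^{gx}$ in two ways gives $(wy)^2=w^2y^2$, i.e.\ $[y,w]=1$, a contradiction. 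That is the missing step. Separately, your $GL_2(4)$ computation is wrong as written: $\wedge^2_{\mathbb{F}_2}(V)\otimes\mathbb{F}_4\cong \det\oplus\det^{-1}\oplus(V_0\otimes V_0^{(2)})$, with the last summand $4$-dimensional and irreducible by Steinberg's tensor product theorem; there is no trivial constituent and no $3$-dimensional one. The conclusion $\Hom_G(\wedge^2V,V)=0$ survives (the constituents still avoid $V_0$ and $V_0^{(2)}$), and the $A_7$ assertion that $\wedge^2V$ is the irreducible $6$-dimensional module is true but itself needs justification --- but even with these repaired, your route proves the lemma only for the two particular pairs $(G,V)$, not as stated.
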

\begin{proof}
Let $\bar{X} = X/V$. Since the commutator map
\begin{eqnarray}\label{E:commx}
\mbox{$[x, -]$ determines a $G$-equivariant linear isomorphism $X/E \to V$,}
\end{eqnarray}
$G$ is transitive on the nonzero vectors of $X/E$ by (a), and $\gen{\bar{x}}
\leq Z(\bar{X})$. Hence if $\bar{X}$ is not elementary abelian, then it is
extraspecial with center $\gen{\bar{x}}$, and $G$ preserves the squaring map
$\bar{X} \to Z(\bar{X})$. This is not the case, because $G$ is transitive on
the nonzero vectors of $X/E$ and $n \geq 3$. Therefore,
\begin{eqnarray}
\label{E:X/Velemab}
\text{$\bar{X}$ is elementary abelian}.
\end{eqnarray}

Assumption (c) now yields that there is a $G$-invariant complement $\bar{Y}$ to
$\gen{\bar{x}}$. Let $Y$ be the preimage of $\bar{Y}$ in $X$.  We claim that
$Y$ is abelian; assume on the contrary.  Then $[Y,Y]$ and $\mho^1(Y)$ are
contained in $V$ since $\bar{Y}$ is elementary abelian, and by assumption,
neither of these are trivial. Similarly, $V$ is contained in $Z(Y)$, which is
not $Y$. Therefore, 
\begin{eqnarray}
\label{E:Ynonabelian}
V = [Y,Y] = \Phi(Y) = \mho^1(Y) = Z(Y), 
\end{eqnarray}
by (a) and \eqref{E:commx}.

By \eqref{E:Ynonabelian}, the squaring map $\bar{Y} \to V$ is $G$-equivariant
linear isomorphism; let $\sqrt{-}$ be its inverse.  Then the map $\xi\colon V
\to V$ given by $\xi(v) = [x, \sqrt{v}]$ is a linear isomorphism commuting with
the action of $G$, and so $\xi \in \rho(G)$ by (b), where $\rho\colon G \to
GL(V)$ is the structure map. Let $g \in G$ map to $\xi^{-1}$ under $\rho$.
Then $y \mapsto [x, y^g]$ is the squaring map. This means that for each $y \in
Y$, we have $y^{gx} = y^2y^g$. Hence, for each pair $w$, $y \in Y$,
\[
w^2y^2w^gy^g = w^{gx}y^{gx} = (wy)^{gx} = (wy)^2w^gy^g
\]
which gives $w^2y^2 = (wy)^2 = w^2y^2[y,w]$. Thus $Y$ is abelian after all. It
follows that $\Omega_1(Y) = V$ or $Y$ by (a), and this completes the proof of
the lemma.
\end{proof}

We now examine the structure of the simple systems occupying the role of $\K$
in Theorem~\ref{T:main}. 
Let $T_0$ be a $2$-group isomorphic to a Sylow $2$-subgroup of $L_3(4)$.  This
is generated by involutions $t_1$, $t_2$, $a_1$, $a_2$, $b_1$, and $b_2$ such
that $Z(T_0) = \gen{t_1, t_2}$ and with additional defining relations:
\[
[a_1, a_2] = [b_1, b_2] = 1, \quad [a_1, b_1] = [a_2, b_2] = t_1, \quad [a_2,
b_1] = t_2, \quad [a_1, b_2] = t_1t_2.
\]

A Sylow subgroup of $M_{23}$ or $McL$ is isomorphic to a Sylow $2$-subgroup of
an extension of $L_3(4)$ by a field automorphism; this is a semidirect product
$T_0\gen{f}$ with
\[
f^2 = 1, \quad [a_1, f] = [a_2, f] = a_1a_2, \quad [b_1, f] = [b_2, f] =
b_1b_2, \quad [t_2, f] = t_1.
\]
A Sylow subgroup of 
$J_3$ is isomorphic to a Sylow $2$-subgroup of $L_3(4)$ extended by a unitary
(i.e., graph-field) automorphism; this is a semidirect product $T_0\gen{u}$
with
\[
u^2 = 1, \quad [a_1,u] = [u,b_1] = a_1b_1, \quad [a_2,u] = [u,b_2] = a_2b_2,
\quad [t_2, u] = t_1.
\]
A Sylow subgroup of $Ly$ is isomorphic to a Sylow $2$-subgroup of $\Aut(L_3(4))$;
this is a semidirect product $T_0\gen{f, u}$ with $[f,u] = 1$ and the relations
above.  

Denote by $T_1$ a $2$-group isomorphic to one of $T_0\gen{f}$, $T_0\gen{u}$, or
$T_0\gen{f,u}$.  Recall that the \emph{Thompson subgroup} $J(P)$ of a finite
$p$-group $P$ is the subgroup generated by the elementary abelian subgroups of
$P$ of largest order. 

\begin{lemma}\label{L:sporprelim}
Let $K$ be $M_{23}$, $McL$, 
$J_3$, or $Ly$, with Sylow $2$-subgroup $T_1$ as above. Then
\begin{enumerate}
\item[(a)] $Z(T_1) = \langle t_1 \rangle$ is of order $2$;
\item[(b)] $\A(T_1) = \{F_1, F_2\}$ where $F_1 = \gen{t_1,t_2,a_1,a_2}$ and
$F_2 = \gen{t_1,t_2,b_1,b_2}$, so that $J(T_1) = T_0$. Also, after suitable
choice of notation, one of the following holds:
\begin{enumerate}
\item[(i)] $K = M_{23}$, $McL$, or $Ly$, and $\Aut_K(F_1) \cong A_7$, or
\item[(ii)] $K = J_3$ and $\Aut_K(F_1) \cong GL_2(4)$. 
\end{enumerate}
\item[(c)]
There is $F \in \A(T_1)$ such that the pair $(\Aut_K(F), F)$ satisfies
assumptions (a)-(c) of Lemma~\ref{L:homocyclic} in the role of $(G,V)$. 
\item[(d)] All involutions in $\gen{t_1, t_2}$ are $\Aut_K(J(T_1))$-conjugate.
\end{enumerate}
\end{lemma}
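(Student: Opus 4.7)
The proof divides along the four labeled parts, with the overall strategy being to exploit the given commutator presentation of $T_1$ together with standard structural information about the sporadic groups $K$ imported from the Atlas and the classical analysis of their $2$-local subgroups. For part (a), I would argue first that $Z(T_1)\leq Z(T_0)=\gen{t_1,t_2}$. Next, the relation $[t_2,f]=t_1$ (and, in the $u$-extension, $[t_2,u]=t_1$) shows $t_2,\,t_1t_2\notin Z(T_1)$. It remains to check $t_1\in Z(T_1)$: the relations $[a_1,f]=a_1a_2$ and $[b_1,f]=b_1b_2$ give $a_1^f=a_2$ and $b_1^f=b_2$, whence $t_1^f=[a_1,b_1]^f=[a_2,b_2]=t_1$; an analogous computation using $[a_1,u]=[u,b_1]=a_1b_1$ and $[a_2,u]=[u,b_2]=a_2b_2$ gives $t_1^u=t_1$.

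For part (b), I would first recall (or verify directly from the presentation) that the two maximal elementary abelian subgroups of $T_0$, viewed as a Sylow $2$-subgroup of $L_3(4)$, are precisely $F_1$ and $F_2$, each of rank $4$. To rule out elementary abelian subgroups of $T_1$ of rank $\geq 5$, note that such a subgroup would intersect $T_0$ in $F_1$ or $F_2$ and would then force an involution of $T_1\setminus T_0$ to centralize one of the two $F_i$; the relations $[a_1,f]=a_1a_2$ and $[a_1,u]=a_1b_1$ together with a short enumeration of cosets of $T_0$ rule this out. Hence $\A(T_1)=\{F_1,F_2\}$ and $J(T_1)=T_0$, since the generators of $F_1$ and $F_2$ together exhaust the generators of $T_0$. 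For the identification of $\Aut_K(F_1)$, I would appeal to the known shape of the centralizer of the central involution in each $K$: for $K\in\{M_{23},McL,Ly\}$ one has $C_K(t_1)$ of shape $2^4{:}A_7$ with $O_2(C_K(t_1))=F_1$ after a suitable choice, whence $\Aut_K(F_1)\cong A_7$; for $K=J_3$ the centralizer has shape $2^{1+4}{:}A_5$, and absorbing the $\mathbf{F}_4$-scalar action yields $\Aut_K(F_1)\cong GL_2(4)$.

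Part (c) is then immediate from (b): taking $F=F_1$ and $G=\Aut_K(F)$, the group $G$ is either $A_7$ or $GL_2(4)$ acting faithfully on the $4$-dimensional $\mathbf{F}_2$-module $F$, and Lemma~\ref{L:H1}(a)-(c) verifies precisely assumptions (a)-(c) of Lemma~\ref{L:homocyclic}. For part (d), since $J(T_1)=T_0$ by (b), $\Aut_K(J(T_1))$ preserves the characteristic subgroup $Z(T_0)=\gen{t_1,t_2}$; it then suffices to produce a $3$-element of $\Aut_K(T_0)$, since any such element acts without fixed points on $Z(T_0)^\#=\{t_1,t_2,t_1t_2\}$ (the only order-$3$ action on $\mathbf{F}_2^2$). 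In each of the four groups such a $3$-element is supplied by the normalizer of $T_0$ in an appropriate $L_3(4)$-section of $K$, where the Sylow-normalizer $T_0{:}3$ provides the required transitivity.

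The principal obstacle is the explicit identification of $\Aut_K(F_1)$ in part (b), which requires pulling specific structural information out of the literature for each of the four sporadic groups and verifying that the relevant action on $F_1$ agrees with the natural $A_7$- or $GL_2(4)$-module. The rank-$5$ elimination in part (b) and the extraction of a transitive $3$-element in part (d) are then essentially bookkeeping within the presentation once the right $2$-local structure has been brought to bear.
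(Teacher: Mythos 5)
Your parts (a) and (c) are fine and match the paper, but parts (b) and (d) have genuine problems. The identification of $\Aut_K(F_1)$ in (b) rests on incorrect structural facts: the centralizer of a $2$-central involution is \emph{not} of shape $2^4{:}A_7$ in $M_{23}$, $McL$, or $Ly$ --- in $McL$ and $Ly$ it is quasisimple (the double covers of $A_8$ and of $A_{11}$, respectively), and in $M_{23}$ it is a $\{2,3,7\}$-group of order $2688$ --- so in none of these cases does it have $F_1$ as its $O_2$. The subgroup you actually need is the normalizer $N_K(F_1)$ (a maximal $2$-local of shape $2^4{:}A_7$ in $M_{23}$ and $McL$); likewise for $J_3$ the relevant subgroup is $N_{J_3}(F_1)\cong 2^4{:}(3\times A_5)$ rather than $C_{J_3}(t_1)\cong 2^{1+4}{:}A_5$, whose $O_2$ is extraspecial and does not even contain $F_1$. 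The paper simply cites Finkelstein and Wilson for the automizers. Separately, your derivation of $\A(T_1)=\{F_1,F_2\}$ is logically incomplete: ruling out elementary abelian subgroups of rank $\geq 5$ only bounds the maximal rank by $4$; you must also show that no rank-$4$ elementary abelian subgroup meets $T_1-T_0$, or else $\A(T_1)$ could be strictly larger. Moreover, in the $Ly$ case $|T_1:T_0|=4$, so a rank-$5$ subgroup need not meet $T_0$ in a rank-$4$ subgroup, and your stated reduction to ``an involution of $T_1\setminus T_0$ centralizing some $F_i$'' fails there. The paper handles both issues uniformly by showing $m(C_{T_0}(t))\leq 2$ for every involution $t\in T_1-T_0$, using the classification of involutory outer automorphisms of $L_3(4)$ (centralizers $L_3(2)$, $U_3(2)$, $Sp_2(4)$ of $2$-ranks $2$, $1$, $2$).

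For part (d), your proposed mechanism --- a $3$-element in the normalizer of $T_0$ inside an $L_3(4)$-section of $K$ --- cannot work for $J_3$, since $7\nmid |J_3|$ and hence $J_3$ contains no subgroup (or section) isomorphic to $L_3(4)$ at all; only its Sylow $2$-subgroup is shared with $L_3(4).2$. The paper instead deduces (d) directly from (c): since $\Aut_K(F)$ is transitive on $F^{\#}$, the three involutions of $\gen{t_1,t_2}\leq F$ are $K$-conjugate, and Burnside's fusion theorem applied to the weakly closed subgroup $J(T_1)$ shows that this conjugacy is realized by $\Aut_K(J(T_1))$. That route is uniform across all four groups and requires no additional $2$-local or $3$-local input.
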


\begin{proof}
Point (a) holds by inspection of the relations above.  Now $F_1$ and $F_2$ are
the elementary abelian subgroups of $T_0$ of maximal rank, and so to prove (b),
it suffices to show that each elementary abelian subgroup of maximal rank in
$T_1$ is contained in $T_0$. Set $L := L_3(4)$, and identify $\Inn(L)$ with
$L$. Write $\Inndiag(L) \geq L$ for the group of inner-diagonal automorphisms
of $L$. Then $\Inndiag(L)$ contains $L$ with index $3$, corresponding to the
size of the center of the universal version $SL_3(4)$ of $L$
\cite[Theorem~2.5.12(c)]{GLS3}. Also, $\Aut(L)$ is a split extension of of
$\Inndiag(L)$ by $\Phi_L\Gamma_L = \Phi_L \times \Gamma_L$, where $\Phi_L =
\gen{\phi} \cong C_2$ is generated by a field automorphism of $L$, and
$\Gamma_L = \gen{\gamma} \cong C_2$ is generated by a graph automorphism
\cite[Theorem~2.5.12]{GLS3}. By \cite[Theorems~4.9.1, 4.9.2]{GLS3}, each
involution of $\Aut(L)-L$ is $\Aut(L)$-conjugate to $\phi$, $\phi \gamma$, or
$\gamma$, and the centralizers in $L$ of these automorphisms are isomorphic to
$L_3(2)$, $U_3(2) \cong (C_3 \times C_3)Q_8$, and $Sp_2(4) \cong A_5$,
respectively, again by those theorems. These centralizers have $2$-ranks $2$,
$1$, and $2$, respectively. Since $T_0$ has $2$-rank $4$, this shows that
$J(T_1) \leq T_0$. From the relations used in defining $T_0$, each involution
in $T_0$ is contained in one of $F_1$ or $F_2$, so we conclude that $\A(T_1) =
\A(T_0) = \{F_1,F_2\}$.  The description of the automizers in (b) follows from
\cite[Table~1]{FinkelsteinM23} for $M_{23}$ and $McL$,
\cite[Lemma~3.7]{FinkelsteinHJ} for $J_3$, and \cite{Wilson1984} for Ly.
Now point (c) follows from (b) and Lemma~\ref{L:H1}, and point (d) follows from
(c) and Burnside's fusion theorem (i.e. the statement that the automizer of a
weakly $K$-closed subgroup of $T_1$ (which is $J(T_1)$ in this case) controls
the $K$-conjugacy in its center).  
\end{proof}

\begin{lemma}\label{L:sporautprelim}
Let $K$ be one of the sporadic groups $M_{23}$, $McL$,
$J_3$, or $Ly$, and let $T_1$ be a Sylow $2$-subgroup of $K$. Then 
\begin{enumerate}
\item[(a)] $\Out(K) = 1$ if $K \cong M_{23}$ or $Ly$, and $\Out(K) \cong C_2$
otherwise; and 
\item[(b)] for each involution $\alpha \in \Aut(K)-\Inn(K)$,
\begin{enumerate}
\item[(i)] $C_K(\alpha) \cong M_{11}$ if $K \cong McL$,
\item[(ii)] $C_K(\alpha) \cong L_2(17)$ if $K \cong J_3$; and
\end{enumerate}
\item[(c)] each automorphism of $K$ centralizing a member of $\A(T_1)$ is
inner. 
\end{enumerate}
\end{lemma}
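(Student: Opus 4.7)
The plan is to prove (a), (b), and (c) in order. Parts (a) and (b) follow from standard references on the sporadic groups, such as the Atlas of Finite Groups: $\Out(M_{23}) = \Out(Ly) = 1$ while $\Out(McL) \cong \Out(J_3) \cong C_2$, and the centralizer inside $K$ of an involution in $\Aut(K) - \Inn(K)$ is $M_{11}$ when $K \cong McL$ and $L_2(17)$ when $K \cong J_3$.

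For (c), the cases $K \cong M_{23}$ and $K \cong Ly$ are immediate from (a). For $K \cong McL$ or $K \cong J_3$, I first establish that $\Aut(K)$ has $2$-rank equal to $m(K) = 4$. Indeed, if $E \leq \Aut(K)$ were elementary abelian of rank $5$, then $E_0 := E \cap \Inn(K)$ would have rank at least $4$ (as $\Aut(K)/\Inn(K) \cong C_2$), so $E_0 \in \A(T_1)$ for some Sylow $T_1$ of $K$. Any $t \in E - \Inn(K)$ would then be an outer involution centralizing $E_0$, while $C_K(t)$ has $2$-rank $2$ by (b) (the Sylow $2$-subgroups of $M_{11}$ and $L_2(17)$ are semidihedral and dihedral of order $16$, respectively, each of $2$-rank $2$), a contradiction.

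Now let $\alpha \in \Aut(K)$ centralize $F \in \A(T_1)$ and suppose $\alpha \notin \Inn(K)$. Since $\Out(K)$ is a $2$-group, the $2'$-part of $\alpha$ is inner, so I may replace $\alpha$ by its $2$-part and assume $|\alpha| = 2^k$ for some $k \geq 1$, with $\alpha$ still outer and centralizing $F$. The abelian subgroup $\gen{F, \alpha}$ has socle of rank at most $4$ by the preceding step, so $\Omega_1(\gen{F,\alpha}) = F$ and hence $\alpha^{2^{k-1}} \in F$. Using that $\Aut_K(F) \cong A_7$ or $GL_2(4)$ acts faithfully on $F$ by Lemma~\ref{L:sporprelim}(b) and Lemma~\ref{L:H1}, one concludes $C_K(F) = F$. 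If $k = 1$, then $\alpha \in F \leq \Inn(K)$, a contradiction. If $k \geq 3$, then $\alpha^{2^{k-2}}$ is an inner element of order $4$ centralizing $F$ and so lies in $C_K(F) = F$ of exponent $2$, again a contradiction.

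The remaining case $k = 2$---with $\alpha$ outer of order $4$ and $v := \alpha^2 \in F$---is the main obstacle. My plan is to work inside the centralizer $C_{\Aut(K)}(v)$: since $v$ is central in $C_K(v)$ (the double cover of $A_8$ for $K \cong McL$, or the analogous involution centralizer for $J_3$), $\alpha$ induces an automorphism of order at most $2$ on $C_K(v)/\gen{v}$ that fixes the image of $F$ pointwise. Using the cohomology vanishing $H^1(\Aut_K(F), F) = 0$ together with the rigidity $C_{GL(F)}(\Aut_K(F)) \leq \Aut_K(F)$ from Lemma~\ref{L:H1}, this induced action should be forced to coincide with conjugation by an element of $F$; lifting back to $C_K(v)$ and invoking the simplicity of $K$ should then force $\alpha \in \Inn(K)$, the desired contradiction.
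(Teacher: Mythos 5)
Your treatment of (a) and (b), and the first three paragraphs of your argument for (c), are sound; indeed they amount to a more careful version of what the paper actually does. The paper disposes of (c) in one sentence by comparing the $2$-rank of $K$ (which is $4$ by Lemma~\ref{L:sporprelim}(b)) with the $2$-ranks of the centralizers in (b)(i)--(ii) (which are $2$), leaving implicit the reduction from arbitrary automorphisms to involutions. You are right that this reduction needs an argument, and your handling of the odd part, of the case $k=1$, and of $k\geq 3$ (via $C_K(F)=F$, which does hold here) is correct.

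The difficulty is that your final paragraph is not a proof: the one configuration your reductions cannot exclude --- $\alpha$ outer of order $4$ with $v:=\alpha^2\in F$ --- is dealt with only by a plan phrased with ``should be forced'' and ``should then force.'' As written there is nothing to verify: you do not say how $H^1(\Aut_K(F),F)=0$ or $C_{GL(F)}(\Aut_K(F))\leq \Aut_K(F)$ are to be brought to bear inside $C_{\Aut(K)}(v)$, nor why the conclusion would be that $\alpha$ is inner rather than merely that $\alpha$ acts on $C_K(v)$ like an element of $F$. So the proof is incomplete exactly at its hardest point. Here is a short way to close the gap using only facts you already invoked. Set $D=C_{\Aut(K)}(F)$ and suppose $D\not\leq\Inn(K)$. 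Then $D\cap\Inn(K)=C_K(F)=F$, so $|D:F|=2$; since $F\leq Z(D)$ and $D/F$ is cyclic, $D$ is abelian. For $d\in D-F$ and $f\in F$ we get $(df)^2=d^2f^2=d^2$, so the unique nontrivial coset $dF$ determines a canonical element $v:=d^2\in F$. Now $N_K(F)$ normalizes both $D$ and $F$, hence stabilizes the coset $dF$ and therefore fixes $v$; but $\Aut_K(F)$ has no nonzero fixed vector on $F$ (it is even transitive on $F^{\#}$ by Lemma~\ref{L:sporprelim} and Lemma~\ref{L:H1}), so $v=1$. Thus $d$ is an outer \emph{involution} centralizing $F$, and your rank comparison (equivalently, the paper's) yields the contradiction. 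This eliminates your $k=2$ case with no cohomology and no analysis of $C_K(v)$.
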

\begin{proof}
Points (a) and (b) follow by inspection of Table~5.3 of \cite{GLS3}. By
Lemma~\ref{L:sporprelim}(b), the $2$-rank of $K$ is $4$, while each of the
centralizers in (b)(i-ii) is of $2$-rank $2$, so (c) holds. 
\end{proof}

\section{Preliminary lemmas}\label{S:prelim}

We now begin in this section the proof of Theorem~\ref{T:main}, and so we fix
the notation and hypotheses that will hold throughout the remainder of the
paper. 

Let $\F$ be a saturated fusion system over the $2$-group $S$, and let $x \in S$
be an involution. Assume that $\gen{x}$ is fully $\F$-centralized, that
$m(C_S(x)) = m(S)$, and that $F^*(C_\F(x)) = Q \times \K$, where $Q$ is cyclic.
Set $\C = C_\F(x)$ and $T = C_S(x)$, so that $\C$ is a saturated fusion system
over $T$ by the remark just after Lemma~\ref{L:fn}. Let $T_1$ be the Sylow
subgroup of $\K$, and set 
\[
R := Q \times T_1 \leq T. 
\]
Assume that $\K$ is the fusion system of one of the sporadic groups $K =
M_{23}$, 
$J_3$, $McL$, or $Ly$.  Since $K$ tamely realizes $\K$ in each case, the
quotient 
\begin{eqnarray}
\label{E:KT/Q}
\text{$T/R$ induces a $2$-group of outer automorphisms of $K$}
\end{eqnarray}
by Theorem~\ref{T:tame}. Arguing by contradiction, we assume 
\[
\text{$\K$ is not a component of $\F$}. 
\]
We fix the presentation in Section~\ref{S:structure} for $T_1$ in whichever
case is applicable, and we note that $\Omega_1(Q) = \gen{x}$ by assumption on
$Q$. 

\begin{lemma}\label{L:Jfn}
Notation may be chosen so that $T$ and $J(T)$ are fully $\F$-normalized.
\end{lemma}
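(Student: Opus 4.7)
My plan is to first arrange $T$ to be fully $\F$-normalized by an appropriate replacement of $x$, then, with $T$ fixed, apply a second replacement that makes $J(T)$ fully $\F$-normalized while preserving the first property. The key is that $J(T)$ is a characteristic subgroup of $T$, so $N_S(T)\leq N_S(J(T))$; this will ensure that the second replacement does not shrink $|N_S(T)|$.

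For the first step, apply Lemma~\ref{L:fn} to obtain $\alpha\in\mathfrak{A}(T)$, and set $y:=x^\alpha$. Then $T^\alpha\leq C_S(y)$, and comparing orders via $|T^\alpha|=|T|=|C_S(x)|$ together with the hypothesis that $\gen{x}$ is fully $\F$-centralized gives $T^\alpha=C_S(y)$ with $\gen{y}$ again fully $\F$-centralized. Because $y\in x^\F$, the isomorphism of the centralizer subsystems transports the conditions $F^*(C_\F(x))\cong Q\times\K$ and $m(C_S(x))=m(S)$ to $y$, so we may rename $y$ as $x$ and assume $T$ is fully $\F$-normalized.

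For the second step, apply Lemma~\ref{L:fn} to $J(T)$ to obtain $\beta\in\mathfrak{A}(J(T))$ with $J(T)^\beta$ fully $\F$-normalized. Since $J(T)$ is characteristic in $T$, we have $T\leq N_S(J(T))$, so $\beta$ restricts to $T$; put $T^*:=T^\beta$ and $z:=x^\beta$. As in the first step, $T^*=C_S(z)$ with $\gen{z}$ fully $\F$-centralized, and $J(T^*)=J(T)^\beta$ is fully $\F$-normalized. The obstacle I need to overcome is that this replacement might destroy $T$ being fully $\F$-normalized. To dispose of it, I use that $\beta$ is an injective group homomorphism on $N_S(J(T))$, and that for any $s\in N_S(T)\leq N_S(J(T))$ and $t\in T$ one has $\beta(t)^{\beta(s)}=\beta(t^s)\in\beta(T)=T^*$, so $\beta(s)\in N_S(T^*)$. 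Hence $N_S(T)^\beta\leq N_S(T^*)$, giving $|N_S(T^*)|\geq|N_S(T)|$, and since $T^*\in T^\F$ and $T$ is already fully $\F$-normalized, we conclude $|N_S(T^*)|=|N_S(T)|$. Thus $T^*$ is also fully $\F$-normalized, and renaming $z$ as $x$ yields both conclusions of the lemma.
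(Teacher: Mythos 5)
Your proposal is correct and follows essentially the same two-step argument as the paper: first replace $x$ by a conjugate making $T$ fully $\F$-normalized (using that $|C_S(x^\alpha)|\geq|T^\alpha|=|C_S(x)|$ forces $x^\alpha$ to remain fully centralized), then apply $\mathfrak{A}(J(T))$ and use $N_S(T)\leq N_S(J(T))$ to see that $|N_S(T^\beta)|\geq|N_S(T)|$, so full normalization of $T$ is preserved. Your direct verification that $N_S(T)^\beta\leq N_S(T^\beta)$ is just a slightly more explicit phrasing of the paper's chain of inequalities.
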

\begin{proof}
We repeatedly use Lemma~\ref{L:fn}. Let $\alpha \in \mathfrak{A}(T)$. Then as
$|C_S(x^\alpha)| \geq |T^\alpha| = |T| = |C_S(x)|$, we have that $x^\alpha$ is
still fully $\F$-centralized. Thus we may assume that $T$ is fully
$\F$-normalized after replacing $x$, $T$, and $J(T)$ by their conjugates under
$\alpha$.  Now let $\beta \in \mathfrak{A}(J(T))$. Then as $N_S(T) =
N_{N_S(J(T))}(T)$, it follows that 
\[
|N_S(T^\beta)| = |N_{N_S(J(T)^\beta)}(T^\beta)| \geq |N_{N_S(J(T))^\beta}(T^\beta)| =|N_{N_S(J(T))}(T)^\beta| = |N_S(T)^\beta| = |N_S(T)|
\]
and so equality holds because $T$ is fully $\F$-normalized.  Hence $T^\beta$ is
still fully $\F$-normalized.  As before, $x^\beta$ is still fully
$\F$-centralized.
\end{proof}

\begin{lemma}\label{L:xnotwc} 
$\gen{x}$ is not weakly $\F$-closed in $T$.
\end{lemma}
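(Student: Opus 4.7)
The plan is to argue by contradiction, appealing to a $Z^*$-theorem for saturated fusion systems. Suppose $\gen{x}$ is weakly $\F$-closed in $T$. Since $x$ is an involution and $T = C_S(x)$, this weak closure is equivalent to the statement $x^\F \cap C_S(x) = \{x\}$; that is, no $\F$-conjugate of $x$ other than $x$ itself commutes with $x$.

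Now invoke the fusion-system analogue of Glauberman's $Z^*$-theorem (Kessar--Linckelmann). The standing hypothesis that $\gen{x}$ is fully $\F$-centralized, together with the displayed condition $x^\F \cap C_S(x) = \{x\}$, forces $\gen{x}$ to be normal in $\F$. Since $|\gen{x}| = 2$, the normalizer of $\gen{x}$ in $\F$ coincides with its centralizer, so
\[
\F \;=\; N_\F(\gen{x}) \;=\; C_\F(x) \;=\; \C.
\]
Consequently $F^*(\F) = F^*(\C) = Q \times \K$, and $\K$ is a component of $\F$. This contradicts the standing assumption (fixed at the start of the section) that $\K$ is not a component of $\F$.

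The only substantive ingredient is the invocation of the fusion-system $Z^*$-theorem; everything else is a direct consequence of the hypotheses fixed at the beginning of the section. The main thing one must check is that the weak-closure hypothesis in $T$ really translates into $x^\F \cap C_S(x) = \{x\}$, which is immediate because $x$ is an involution and $T = C_S(x)$. I do not anticipate a serious obstacle beyond identifying the precise reference for the $Z^*$-theorem in the form used.
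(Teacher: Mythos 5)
Your argument reaches the right contradiction and the overall skeleton matches the paper's: assume weak closure, deduce that $\gen{x}$ is central in $\F$, conclude $\F=C_\F(x)$ so that $\K$ is a component, contradicting the standing reduction. The difference is entirely in how the middle step is justified. The paper does it by hand in two elementary moves: first, if $T<S$ then any $a\in N_S(T)-T$ satisfies $x^a\in Z(T)\leq T$ and $x^a\neq x$ (since $a\notin C_S(x)$), already contradicting weak closure in $T$; hence $T=S$ and $x\in Z(S)$. Second, since $Z(S)$ lies in the center of every $\F$-centric subgroup and $\gen{x}$ is weakly closed in $S=T$, every automorphism of every centric subgroup fixes $x$, so Alperin's fusion theorem gives $x\in Z(\F)$. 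You instead outsource both moves to a fusion-system $Z^*$-theorem. That is logically fine, but the citation is the one soft spot: what Kessar and Linckelmann prove concerns block fusion systems (and, separately, Glauberman's $ZJ$-theorem); a $Z^*$-theorem for arbitrary abstract saturated $2$-fusion systems is not something you can pull off the shelf in the form you state it. Fortunately, the implication you actually need --- fully centralized involution with $x^\F\cap C_S(x)=\{x\}$ implies $C_\F(x)=\F$ --- is the \emph{easy} direction of that circle of ideas (the hard content of Glauberman's theorem is the passage from $Z(\F_S(G))$ to $Z^*(G)$, i.e., the $O_{2'}$ part, which has no analogue for abstract fusion systems), and its proof is precisely the paper's two-step argument above. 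So your translation of ``weakly $\F$-closed in $T$'' into $x^\F\cap C_S(x)=\{x\}$ is correct, your reduction $N_\F(\gen{x})=C_\F(x)$ is correct, and the proof is sound once the invoked statement is either proved directly (two lines, as in the paper) or attributed more carefully.
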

\begin{proof}
Assume on the contrary, in which case $T = S$, otherwise $N_S(T)$ contains $T$
properly and moves $x$. It follows that $x \in Z(S)$, which is contained in the
center of every $\F$-centric subgroup. Hence $x \in Z(\F)$ by Alperin's fusion
theorem \cite[Theorem~A.10]{BrotoLeviOliver2003}, and we conclude that $\K$ is
a component of $\C = \F$, contrary to hypothesis.
\end{proof}

\begin{lemma}\label{L:centT1}
The following hold.
\begin{enumerate}
\item[(a)] $J(T) = J(R) = \gen{x} \times J(T_1)$; and
\item[(b)] $C_T(T_1) = Q\gen{t_1}$. 
\end{enumerate}
\end{lemma}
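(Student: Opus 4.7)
The plan is to prove (b) first, then leverage the same circle of ideas for (a). Both rest on analyzing the automorphism $c_g|_K$ induced on $K$ by an element $g \in T$ that acts as an inner automorphism of $K$, and converting it to membership of $g$ in $R$ by exploiting that $T_1$ is Sylow in $K$.

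For (b), the inclusion $Q\gen{t_1} \leq C_T(T_1)$ is immediate since $Q$ is a direct factor of $R$ and $t_1 \in Z(T_1)$. For the reverse, let $g \in C_T(T_1)$. Since $T_1 = T \cap K$ is normal in $T$ (as $K \trianglelefteq C$), conjugation by $g$ induces an automorphism $c_g|_K$ of $K$ centralizing $T_1$; by Lemma~\ref{L:sporautprelim}(c) this is inner, say $c_g|_K = c_k$ for some $k \in C_K(T_1)$. Since $g$ has $2$-power order and $Z(K) = 1$, $k$ is a $2$-element, and because $k$ commutes with $T_1$, the subgroup $\gen{k}T_1$ is a $2$-group containing the Sylow $2$-subgroup $T_1$ of $K$; hence $\gen{k}T_1 = T_1$, forcing $k \in T_1 \cap C_K(T_1) = Z(T_1) = \gen{t_1}$ by Lemma~\ref{L:sporprelim}(a). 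Then $gk^{-1}$ centralizes $K$ and lies in $T$, so $gk^{-1} \in C_T(K) = Q$ by the discussion after Theorem~\ref{T:tame}; thus $g \in Q\gen{t_1}$.

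For (a), first note $J(R) = \gen{x} \times J(T_1)$: because $\Omega_1(Q) = \gen{x}$, every elementary abelian of $R = Q \times T_1$ lies in $\gen{x} \times \Omega_1(T_1)$ and has rank at most $1 + m(T_1) = 5$, with the maximal ones being the $\gen{x} \times F_i$ for $F_i \in \A(T_1)$; hence $J(R) = \gen{x} \times \gen{F_1, F_2} = \gen{x} \times J(T_1)$ by Lemma~\ref{L:sporprelim}(b). To get $J(T) = J(R)$ it suffices to show that every elementary abelian $E \leq T$ of rank at least $5$ is contained in $R$. The crucial observation, proved exactly as in (b), is that any $g \in T$ acting as an inner automorphism of $K$ lies in $R$: one writes $c_g|_K = c_k$ with $k$ a $2$-element normalizing $T_1$, so $k$ lies in the Sylow $2$-subgroup $T_1$ of $N_K(T_1)$, and then $gk^{-1} \in C_T(K) = Q$ gives $g \in T_1 Q = R$.

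Suppose for contradiction that $E \not\leq R$. Then by the observation, each $y \in E \setminus R$ acts as an outer involution of $K$; since $|\Out(K)| \leq 2$ by Lemma~\ref{L:sporautprelim}(a), the image of $E$ in $\Out(K)$ has order at most $2$, giving $\mathrm{rank}(E \cap R) \geq 4$. Fixing such a $y$: it preserves $Q = O_2(C)$ and $T_1 = T \cap K$, so $C_R(y) = C_Q(y) \times C_{T_1}(y)$. Now $C_{T_1}(y) \leq C_K(y)$ has $2$-rank at most $2$ by Lemma~\ref{L:sporautprelim}(b), and $(E \cap R) \cap Q \leq \gen{x}$, whence $\mathrm{rank}(E \cap R) \leq 3$, a contradiction. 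I expect the main obstacle to be the ``inner-action implies in $R$'' observation, which marries the automorphism theory of $K$ (through Lemma~\ref{L:sporautprelim}) to Sylow theory inside $N_K(T_1)$; once this is in hand, both parts fall out uniformly.
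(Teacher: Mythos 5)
Your proof is correct and follows essentially the same route as the paper: for part (a) you make exactly the paper's counting argument, showing that a rank-$5$ elementary abelian subgroup not contained in $R$ would have $m(E\cap R)\geq 4$ while $m(C_R(y))\leq 1+2=3$ for $y\in E\setminus R$, using $|\Out(K)|\leq 2$ and the $2$-rank of the outer-involution centralizers from Lemma~\ref{L:sporautprelim}. The only cosmetic difference is that you re-derive the fact that $R$ is precisely the preimage of $\Inn(K)$ in $T$ (the paper packages this as display~\eqref{E:KT/Q} via tameness) and prove (b) directly by a Sylow argument in $N_K(T_1)$ together with $C_C(K)=Q$, rather than reading it off from (a) and $Z(T_1)=\gen{t_1}$; both are fine.
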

\begin{proof}
Suppose (a) does not hold. Choose $A \in \A(T)$ with $A \nleq \gen{x} \times
J(T_1)$. Then $A \nleq R$ by the structure of $R$, and $A$ acts nontrivially on
$K$ by \eqref{E:KT/Q}. In particular, $\Out(K) = 2$ and $m(C_R(A)) \leq 3$ by
Lemma~\ref{L:sporautprelim}. Hence, $m(A) \leq 4$ while $m(R) = 5$. This
contradicts the choice of $A$ and establishes (a).  By
Lemma~\ref{L:sporprelim}(a), $C_R(T_1) = Q\gen{t_1}$.  Also, $C_T(T_1) =
C_R(T_1)$ by part (a), so (b) is also established.  
\end{proof}

\begin{lemma}\label{L:rank-Z(S)} 
If $T=S$, then $\Omega_1Z(S) = \gen{x,t_1}$. If $T < S$, then $\Omega_1(Z(S)) =
\gen{t_1}$.  
\end{lemma}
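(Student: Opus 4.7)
The plan is to first pin down $\Omega_1 Z(T)$ and then pass to $\Omega_1 Z(S)$ via a normalizer-chain analysis.

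First I would show $\Omega_1 Z(T) = \gen{x, t_1}$. For the inclusion $\supseteq$: $x \in Z(T)$ as $T = C_S(x)$, and $T$ normalizes the characteristic subgroup $Z(T_1) = \gen{t_1}$ of order $2$ (Lemma~\ref{L:sporprelim}(a)) of the normal subgroup $T_1$ of $T$, forcing $t_1 \in Z(T)$. For $\subseteq$: any involution $z \in Z(T)$ centralizes $T_1$, so $z \in C_T(T_1) = Q\gen{t_1}$ by Lemma~\ref{L:centT1}(b); and $\Omega_1(Q\gen{t_1}) = \gen{x, t_1}$ since $Q$ is cyclic with $\Omega_1 Q = \gen{x}$. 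The case $T = S$ follows immediately.

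For $T < S$: $Z(S) \leq C_S(x) = T$ and $Z(S)$ centralizes $T$, so $Z(S) \leq Z(T)$ and $\Omega_1 Z(S) \leq \gen{x, t_1}$; further $x \notin Z(S)$, else $S \leq C_S(x) = T$. Since $\Omega_1 Z(S) \neq 1$, the choice is $\Omega_1 Z(S) = \gen{t_1}$ or $\gen{xt_1}$. To decide, I would analyze the action of $N_S(T)$ on the characteristic subgroup $\gen{x, t_1}$ of $T$: its kernel contains $T$ (as $\gen{x, t_1} \leq Z(T)$) and is exactly $T$ (any $s \in N_S(T)$ fixing $x$ is in $C_S(x) = T$). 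The induced faithful action of $N_S(T)/T$ is nontrivial since $T < N_S(T)$ in the $2$-group $S$, and because $\Aut(\gen{x, t_1}) \cong S_3$ has Sylow-$2$ subgroup $C_2$, this forces $[N_S(T):T] = 2$ with $N_S(T)/T$ acting as a transposition fixing either $t_1$ or $xt_1$.

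In the favorable case where $t_1$ is fixed, an induction along the normalizer chain $T = N_0 < N_1 < \cdots < S$, with $N_{i+1} = N_S(N_i)$, yields $\Omega_1 Z(N_i) = \gen{t_1}$ for all $i \geq 1$, and hence $\Omega_1 Z(S) = \gen{t_1}$. The inductive step: any involution $y \in Z(N_{i+1})$ centralizes $T \leq N_{i+1}$, so $y \in C_S(T) = Z(T)$, placing $y \in \gen{x, t_1}$; also $y \neq x$, as $x \in Z(N_{i+1})$ would force $N_{i+1} \leq C_S(x) = T$; and $y$ normalizes $T$, so $y \in N_S(T) = N_1 \leq N_i$, whence $y \in Z(N_i)$, and by induction $y = t_1$.

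The main obstacle is ruling out the unfavorable case where $N_S(T)/T$ swaps $x$ and $t_1$. There, $x^s = t_1$ for some $s \in N_S(T) \setminus T$, giving $x \sim_{\F} t_1$; the full $\F$-centralization of $x$ then forces $C_S(t_1) = T$, and $c_s$ induces an isomorphism $C_\F(t_1) \cong \C$. This places a second component $\K^{c_s}$, isomorphic to $\K$, inside $C_\F(t_1)$, with Sylow $T_1^{c_s} \leq T$ distinct from $T_1$ (since $x \in T_1^{c_s}$ while $Q \cap T_1 = 1$). The plan is to derive a contradiction by combining this second $\K$-structure with the subsystem $C_\K(t_1) \leq C_\F(t_1)$ on $C_{T_1}(t_1) = T_1$ and the known structure of centralizers of the central involution in $M_{23}, J_3, McL, Ly$ (via Lemma~\ref{L:sporautprelim}): the coexistence of two distinct $\K$-type Sylow subgroups inside $T$ should either violate the rank constraint $m(T) = m(S)$ or force $\K$ itself to be normal in $\F$, contradicting the standing hypothesis that $\K$ is not a component of $\F$.
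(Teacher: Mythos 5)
Your reduction to the four-group $\gen{x,t_1}$ is sound and matches the paper's general line: $\Omega_1Z(T)=\gen{x,t_1}$ via $C_T(T_1)=Q\gen{t_1}$, the case $T=S$ is immediate, and for $T<S$ every involution of $Z(S)$ lies in $\gen{x,t_1}\setminus\{x\}$ and is centralized by any $s\in N_S(T)-T$, so everything hinges on whether $s$ can swap $x$ and $t_1$. (Your normalizer-chain induction in the ``favorable'' case is harmless but unnecessary: once the $s$-fixed involutions in $\gen{x,t_1}$ are only $t_1$, the nontriviality of $\Omega_1Z(S)\leq\gen{t_1}$ finishes it at once.)

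The genuine gap is the ``unfavorable'' case. You do not rule it out; you only state a plan (``the coexistence of two distinct $\K$-type Sylow subgroups inside $T$ should either violate the rank constraint or force $\K$ to be normal''), and no concrete contradiction is derived. That step is not routine: showing $x\not\sim_\F t_1$ is exactly Lemma~\ref{L:xnsimt1}(a), whose proof in the $T<S$ case quotes the present lemma, so pursuing your fusion-theoretic route invites circularity unless you redo that analysis from scratch, and the sketch as given (comparing $\K^{c_s}$ with $C_\K(t_1)$ inside $C_\F(t_1)$) would amount to a substantial standard-form-style argument, not a remark. The paper avoids all of this with a purely $2$-local observation you are missing: any $a\in N_S(T)-T$ normalizes the characteristic subgroups $\Omega_1Z(T)=\gen{x,t_1}$ and $[J(T),J(T)]=[J(T_1),J(T_1)]=\gen{t_1,t_2}$ (Lemma~\ref{L:centT1}(a)), hence normalizes their intersection $\gen{t_1}$ and so centralizes $t_1$; therefore the swap $x\leftrightarrow t_1$ is impossible, only your favorable case occurs, and $\Omega_1Z(S)=\gen{t_1}$ follows since $a$ does not centralize $x$ (so neither $x$ nor $xt_1$ lies in $Z(S)$). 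Incorporating that one observation would close the gap and make the case split in your argument disappear.
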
 
\begin{proof} 
Note first that $\Omega_1(Z(S)) \leq T$. By Lemma~\ref{L:sporautprelim}(c) and
\eqref{E:KT/Q}, $\Omega_1(Z(S)) \leq R$, and so $\Omega_1Z(S) \leq
\Omega_1(Z(R)) = \gen{x, t_1}$ by Lemma~\ref{L:sporprelim}(a).  Thus, the lemma
holds in case $T = S$.  In case $T < S$, let $a \in N_S(T)-T$ with $a^2 \in T$.
Note that $[J(T),J(T)] = [J(T_1),J(T_1)] = \gen{t_1,t_2}$ by
Lemma~\ref{L:centT1}(a). So $a$ normalizes $\Omega_1Z(T) = \gen{x, t_1}$ and
$\Omega_1Z(T) \cap [J(T),J(T)] = \gen{x, t_1} \cap \gen{t_1,t_2} = \gen{t_1}$,
but $a$ does not centralize $x$.  Thus, $\Omega_1Z(S) = \gen{t_1}$ as claimed.  
\end{proof}

\begin{lemma}\label{L:xnsimt1}
The following hold.
\begin{enumerate}
\item[(a)] $x$ is not $\F$-conjugate to $t_1$; and
\item[(b)] $x$ is conjugate to $xt_1$ if and only if $T < S$, and in this case
$x$ is $N_S(T)$-conjugate to $xt_1$. 
\end{enumerate}
\end{lemma}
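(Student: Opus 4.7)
I would prove (a) first by contradiction, then deduce both directions of (b) from (a) together with the description of $\Omega_1(Z(T))$ in Lemma~\ref{L:rank-Z(S)} and a characteristic subgroup argument in the case $T = S$.

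For (a), suppose $x \sim_\F t_1$. Using Lemma~\ref{L:fn}, pick a fully $\F$-centralized conjugate $t_1^* \in \gen{t_1}^\F$, and extend an $\F$-isomorphism $\gen{t_1^*} \to \gen{x}$ to a bijective $\F$-morphism $\tilde\phi\colon C_S(t_1^*) \to T$ using the fully centralized property of $\gen{x}$. This $\tilde\phi$ realizes an isomorphism of saturated fusion systems $C_\F(t_1^*) \cong C_\F(x)$, so $F^*(C_\F(t_1^*)) \cong Q \times \K$. Applying Theorem~\ref{T:tame} on both sides yields an isomorphism $\sigma$ of the corresponding finite group models $C^*$ and $C$ extending $\tilde\phi$ on the Sylows, and $\sigma$ must match the characteristic subgroups $Q^*$ with $Q$ and $K^*$ with $K$; thus $t_1^* = \sigma^{-1}(x)$ lies in $Q^*$. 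The contradiction is then derived by noting that $t_1 \sim_\F t_1^*$ combined with $t_1 \in T_1 \leq K$ should force $t_1^*$ into an $\F$-conjugate of $T_1$ sitting inside the component $K^*$ of $C^*$; combining this with $t_1^* \in Q^*$ gives $t_1^* \in Q^* \cap K^* = 1$, a contradiction. The main technical obstacle is the last step: comparing the $\F$-conjugates of $T_1$ lying in $C_S(t_1^*)$ with the Sylow $2$-subgroups of $K^*$, which I expect to require the uniqueness of the component in $C^*$ and tame control of fusion on Sylows within the model.

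For the forward direction of (b) together with the $N_S(T)$-conjugacy statement, suppose $T < S$. Pick $a \in N_S(T) \setminus T$, which normalizes the characteristic subgroup $\Omega_1(Z(T)) = \gen{x, t_1}$ of $T$ by Lemma~\ref{L:rank-Z(S)}. Since $a \notin C_S(x)$, conjugation by $a$ moves $x$, so $x^a \in \{t_1, xt_1\}$; part (a) rules out the first option, giving $x^a = xt_1$, which simultaneously establishes the existence of the $\F$-conjugacy and its realization inside $N_S(T)$.

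For the converse, assume $T = S$ and $x \sim_\F xt_1$. Both $\gen{x}$ and $\gen{xt_1}$ are fully $\F$-centralized (being contained in $Z(S)$), so the conjugation extends to an automorphism $\tilde\phi \in \Aut_\F(S)$ swapping $x \leftrightarrow xt_1$. Combining Lemmas~\ref{L:centT1} and~\ref{L:rank-Z(S)}, one has $\gen{t_1} = \Omega_1(Z(S)) \cap [J(S), J(S)]$, so $\gen{t_1}$ is characteristic in $S$ and $\tilde\phi$ fixes $t_1$. Transporting the component $\K \trianglelefteq \C = C_\F(x)$ by $\tilde\phi$ then produces a subsystem of $C_\F(xt_1) = \tilde\phi(\C)$ isomorphic to $\K$; the contradiction will come by showing that this subsystem must coincide with $\K$ and that $\K$ is therefore normalized by $\langle \C, \tilde\phi \rangle = \F$, contradicting the standing assumption that $\K$ is not a component of $\F$. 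The deepest point of the whole proof is the component-tracking in (a); the converse of (b) is, by contrast, relatively formal once $\gen{t_1}$ is known to be characteristic in $S$.
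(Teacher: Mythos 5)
Your treatment of the forward direction of (b) is exactly the paper's argument and is fine (modulo the small point that $\Omega_1(Z(T))=\gen{x,t_1}$ is established inside the proof of Lemma~\ref{L:rank-Z(S)} via Lemma~\ref{L:centT1}, rather than in its statement). The real problem is part (a). Your proposed argument hinges on the step you yourself flag: having placed $t_1^*=\sigma^{-1}(x)$ in $Q^*$, you need $t_1^*\in K^*$ in order to reach $Q^*\cap K^*=1$, and your only justification is that $t_1\in K$ and $t_1\sim_\F t_1^*$. This does not work: ``lying in the component of the centralizer of the distinguished involution $x$'' is not a property that an $\F$-conjugation $t_1\mapsto t_1^*$ transports. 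Such a conjugation extends (at best) to a map $C_S(t_1)\to C_S(t_1^*)$, and the hypotheses of the theorem tell you nothing whatsoever about $C_\F(t_1)$ or about how $T_1$ sits inside $C_S(t_1)$; there is no reason for a conjugate of $T_1$ to land in $K^*$, and no uniqueness-of-components statement for $C^*$ is available that would force it. So the contradiction is never reached. The paper's proof of (a) is far more elementary and avoids all of this: if $T<S$, then $t_1\in Z(S)$ has $|C_S(t_1)|=|S|>|T|=|C_S(x)|$, which is incompatible with $\gen{x}$ being fully $\F$-centralized; if $T=S$, then by the extension axiom any morphism carrying $x$ into $Z(T)$ extends to $\Aut_\F(T)$, which preserves $J(T)$ and hence $[J(T),J(T)]=\gen{t_1,t_2}$, and one concludes because $t_1\in\gen{t_1,t_2}$ while $x\notin\gen{t_1,t_2}$ by Lemma~\ref{L:centT1}(a). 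You should look for invariants of this kind (centralizer order; membership in characteristic subgroups of $T$ preserved by $\Aut_\F(T)$) before reaching for tameness and group models.

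The converse direction of (b) in your proposal is also not complete as stated. After extending to $\tilde\phi\in\Aut_\F(S)$ fixing $t_1$, you assert that the transported component ``must coincide with $\K$'' and that $\gen{\C,\tilde\phi}=\F$, neither of which is justified, and even a subsystem normalized by all of $\F$ is not automatically a component without a subnormality argument. The paper instead deduces this case from (a) together with Burnside's fusion theorem applied to $\Omega_1(Z(S))=\gen{x,t_1}$, concluding that $\gen{x}$ is weakly $\F$-closed there; whatever route you take, you need an argument pinning down the $\Aut_\F(S)$-orbit of $x$ on $\gen{x,t_1}$, not a transport-of-structure sketch.
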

\begin{proof}
For part (a), let $\varphi \in \F$ with $x^\varphi \in Z(T)$.  Assume first
that $T=S$.  By the extension axiom, $\varphi$ extends to $\tilde{\varphi} \in
\Aut_\F(T)$, which restricts to an automorphism of $J(T)$.
Lemma~\ref{L:centT1}(a) shows that $x \notin [J(T),J(T)]$, while $t_1 \in
[J(T),J(T)]$ from Lemma~\ref{L:sporprelim}(d). Hence, $x^\phi \neq t_1$; this
shows $x$ is not $\F$-conjugate to $t_1$ in this case.

Now assume that $T < S$. Then $x \notin Z(S)$, whereas $Z(S) = \gen{t_1}$ by
Lemma~\ref{L:rank-Z(S)}. Since $\gen{x}$ is fully $\F$-centralized by
assumption, we conclude that $x$ is not $\F$-conjugate to $t_1$ in this case
either. This completes the proof of (a).

If $T < S$, then $x$ is $N_S(T)$-conjugate to $xt_1$ by (a), while if $T = S$,
then (a) and Burnside's fusion theorem imply that $\gen{x}$ is weakly
$\F$-closed in $\Omega_1(Z(S)) = \gen{x,t_1}$. Thus, (b) holds. 
\end{proof}

\section{The $2$-central case}\label{S:2central}

In this section it is shown that $T < S$; that is, $x$ is not $2$-central.  We
continue the notation set at the beginning of Section~\ref{S:prelim}.

\begin{lemma}
\label{L:xwcR}
If $T = S$, then $\gen{x}$ is weakly $\F$-closed in $R$. 
\end{lemma}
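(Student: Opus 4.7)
The plan is to argue by contradiction: assume that some involution $y \in R$ distinct from $x$ is $\F$-conjugate to $x$. Using $R = Q \times T_1$ with $Q$ cyclic and $\Omega_1(Q) = \langle x \rangle$, I first observe that every involution of $R$ is either an involution $t$ of $T_1$, or has the form $xt$ for some involution $t \in T_1$. The goal is to rule out both possibilities in turn, each time reducing to a statement already established in Lemma~\ref{L:xnsimt1}.

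In the first case ($y = t \in T_1$), I would exploit that each of $M_{23}$, $McL$, $J_3$, $Ly$ has a unique $K$-conjugacy class of involutions. Hence $t$ is $K$-conjugate to $t_1$, so the isomorphism $t \mapsto t_1$ is realised by a morphism of $\K$, and therefore of $\C$ and $\F$. Composing with the assumed $\F$-isomorphism $x \mapsto t$, transitivity yields $x \sim_\F t_1$, which directly contradicts Lemma~\ref{L:xnsimt1}(a).

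For the second case ($y = xt$), I would again use the one-class property to produce a morphism $\psi$ in $\K$ with $t^\psi = t_1$. Since $\K$ is a direct factor of $F^*(\C) = Q \times \K$, this $\psi$ extends to a morphism of $\C = C_\F(x)$ fixing $Q$ pointwise, so in particular it sends $xt$ to $xt_1$. Composing with the assumed isomorphism $x \mapsto xt$ in $\F$, I obtain $x \sim_\F xt_1$, which under the standing hypothesis $T = S$ is precisely what Lemma~\ref{L:xnsimt1}(b) forbids.

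The only point requiring care is the extension of the $\K$-morphism $t \mapsto t_1$ to a $\C$-morphism fixing $x$ pointwise, but this is immediate from the direct product description of $F^*(\C)$, so I do not expect a real obstacle in the argument.
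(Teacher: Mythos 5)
Your proof is correct and follows essentially the same route as the paper: both arguments use that $K$ has a single class of involutions (so every involution of $R$ other than $x$ is $\C$-conjugate to $t_1$ or to $xt_1$, the latter via a $\C$-morphism fixing $x$, which exists by the very definition of $C_\F(x)$) and then invoke Lemma~\ref{L:xnsimt1}(a) and (b). The paper merely packages the case split as the statement that $\{x\}$, $(xt_1)^{\C}$, and $t_1^{\C}$ are the only $\C$-classes of involutions.
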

\begin{proof}
Assume $T = S$. Then $\Omega_1(Z(S)) = \gen{x,t_1}$ by Lemma~\ref{L:rank-Z(S)}.
Using Burnside's fusion theorem and assumption, we see from
Lemma~\ref{L:xnsimt1} that 
\begin{eqnarray}\label{E:xwcZT1}
\text{$\gen{x}$ is weakly $\F$-closed in $\gen{x,t_1}$.}
\end{eqnarray}

By inspection of \cite[Table~5.3]{GLS3}, $K$ has one class of involutions.
Thus, there are exactly three $\C$-classes of involutions, namely $\{x\}$,
$(xt_1)^{\C}$, and $t_1^{\C}$. The lemma therefore holds 
by \eqref{E:xwcZT1}. 
\end{proof}

\begin{lemma}\label{L:TneqS} 
If $T=R$, then $T < S$. In particular, $T < S$ in case $\K$ is the fusion
system of $M_{23}$ or $Ly$. 
\end{lemma}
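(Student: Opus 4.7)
The statement reduces to a very clean contradiction between the two lemmas immediately preceding it.

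My plan is to handle the two assertions separately. For the main statement, I assume by contradiction that $T = R$ and $T = S$ simultaneously. Since $T = S$, the hypothesis of Lemma~\ref{L:xwcR} is satisfied, so $\gen{x}$ is weakly $\F$-closed in $R$. But $R = T$ by assumption, so $\gen{x}$ is weakly $\F$-closed in $T$, directly contradicting Lemma~\ref{L:xnotwc}. Hence $T = R$ forces $T < S$.

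For the ``in particular'' part, I need to argue that when $\K$ is the fusion system of $M_{23}$ or $Ly$, then automatically $T = R$, so the first part applies. This follows immediately from \eqref{E:KT/Q}: the quotient $T/R$ is isomorphic to a $2$-subgroup of $\Out(K)$, and by Lemma~\ref{L:sporautprelim}(a) we have $\Out(K) = 1$ when $K \cong M_{23}$ or $Ly$. Thus $T = R$ in these cases, and the first part of the lemma then yields $T < S$.

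There is no real obstacle here: the work has already been done in Lemmas~\ref{L:xnotwc}, \ref{L:xwcR}, and \ref{L:sporautprelim}(a), together with the tameness observation \eqref{E:KT/Q}. The only thing to be careful about is invoking Lemma~\ref{L:xwcR} correctly (its hypothesis is exactly $T = S$), and noting that the combination $T = R = S$ is precisely what makes the two ``weakly closed / not weakly closed'' conclusions refer to the same subgroup.
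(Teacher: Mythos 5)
Your proof is correct and follows essentially the same route as the paper: both derive the contradiction from Lemma~\ref{L:xwcR} under the assumption $T=R=S$, and both get the $M_{23}$/$Ly$ case from $\Out(K)=1$ via Lemma~\ref{L:sporautprelim}(a) and \eqref{E:KT/Q}. The only cosmetic difference is that you cite Lemma~\ref{L:xnotwc} to finish, whereas the paper re-runs that lemma's argument inline ($x\in Z(S)$ weakly closed forces $\gen{x}\leq Z(\F)$, making $\K$ a component); your packaging is a valid and slightly cleaner shortcut.
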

\begin{proof}
Assume $T = R$, and also to the contrary that $T = S$. By Lemma~\ref{L:xwcR},
$\gen{x} \leq Z(S)$ is weakly $\F$-closed, and so is fixed by each automorphism
of each $\F$-centric subgroup. Therefore, $\gen{x} \leq Z(\F)$, and so $\K$ is
a component of $\C = \F$, contrary to assumption. The last statement follows
then follows from Lemma~\ref{L:sporautprelim}(a). 
\end{proof}

\begin{lemma}\label{L:McL-cent}
Assume $\K$ is the fusion system of $McL$ or $J_3$.
Then $T < S$. 
\end{lemma}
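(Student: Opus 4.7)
The plan is to assume for contradiction that $T = S$ and derive a contradiction by showing that $\gen{x}$ is weakly $\F$-closed in $S$. Combined with $\gen{x} \leq Z(S)$ (since $T = S$) and Alperin's fusion theorem, this will force $\gen{x} \leq Z(\F)$, making $\K$ a component of $\F = \C$, contradicting the standing assumption that $\K$ is not a component of $\F$.

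First I would note that by Lemma~\ref{L:TneqS} we have $T > R$, and since $\Out(K) \cong C_2$ by Lemma~\ref{L:sporautprelim}(a), the image $T/Q$ in $\Aut(K)$ is a $2$-subgroup of the Sylow $2$-subgroup of $\Aut(K)$ containing $T_1$ with index $2$, forcing $|T:R| = 2$. By Lemma~\ref{L:xwcR}, $\gen{x}$ is weakly $\F$-closed in $R$, so it suffices to rule out $\F$-conjugates of $x$ in $S \setminus R$. Suppose for contradiction that $y \in S \setminus R$ is an involution with $y \sim_\F x$. Since $\gen{x}$ is fully $\F$-centralized, Lemma~\ref{L:fn} yields a morphism $\tilde\phi \in \Hom_\F(C_S(y), S)$ extending the conjugation $y \mapsto x$. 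The first key observation is that $\tilde\phi(C_Q(y))$ is a cyclic $2$-subgroup of $S$ whose unique involution is $\tilde\phi(x)$, which is $\F$-conjugate to $x$ but distinct from $x$ by injectivity of $\tilde\phi$ (since $\tilde\phi(y) = x \neq y$). Weak closure of $\gen{x}$ in $R$ then forces $\tilde\phi(x) \in S \setminus R$. However, every cyclic subgroup $\gen{s} \leq S$ of order at least $4$ has its unique involution in $R$: indeed, $|S/R| = 2$ forces $s^2 \in R$ for each $s \in S$, and the unique involution of $\gen{s}$ (when $|s| \ge 4$) is a power of $s^2$. Hence $|C_Q(y)| \leq 2$, so $C_Q(y) = \gen{x}$. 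Combined with Lemma~\ref{L:sporautprelim}(b), which gives $m(C_{T_1}(y)) \leq 2$ (as $C_K(\bar{y})$ is $M_{11}$ or $L_2(17)$, with Sylow $2$-subgroup $SD_{16}$ or $D_{16}$ of $2$-rank $2$), together with $[C_T(y):C_R(y)] \leq 2$ and the direct decomposition $C_R(y) = C_Q(y) \times C_{T_1}(y)$, we deduce
\[
m(C_T(y)) \leq m(C_Q(y)) + m(C_{T_1}(y)) + 1 \leq 1 + 2 + 1 = 4,
\]
which is strictly less than $m(S) = 5$.

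The main obstacle is to convert this rank deficit into the desired contradiction. The plan is to use $\tilde\phi$ to transport a maximal elementary abelian subgroup of $S$ into $C_T(y)$, producing an elementary abelian of rank $5$ and contradicting $m(C_T(y)) \leq 4$. By Lemma~\ref{L:centT1}(a) and Lemma~\ref{L:sporprelim}(b), the maximal elementary abelian subgroups of $S$ all have the form $\gen{x} \times F_i$ with $F_i \in \A(T_1)$. To guarantee one such subgroup lies in the image $\tilde\phi(C_T(y))$, I would exploit that both $x$ and $\tilde\phi(x)$ are $\F$-conjugates of $x$ lying in the center of $\tilde\phi(C_T(y))$, and analyze how the component $\K$ of $C_\F(x) = \C$ restricts to a subsystem of $\C$ on $\tilde\phi(C_T(y))$. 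Iterating the argument by applying the same analysis with the second $\F$-conjugate $\tilde\phi(x) \in S \setminus R$ in place of $y$ may be needed to extract the final rank-$5$ configuration and conclude.
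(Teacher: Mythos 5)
Your setup is sound and matches the paper's up to a point: assuming $T=S$, you correctly get $R<T$, $|T:R|=2$, and reduce via Lemma~\ref{L:xwcR} to ruling out an $\F$-conjugate $y$ of $x$ in $S\setminus R$. Your rank computation $m(C_T(y))\le 4$ is also correct (and the observation $C_Q(y)=\gen{x}$ is a nice aside). But the proof has a genuine gap at exactly the point you flag as ``the main obstacle,'' and the completion you sketch cannot work. Since $x\not\sim_\F t_1$ and $x\not\sim_\F xt_1$ when $T=S$ (Lemma~\ref{L:xnsimt1}), $x$ is the \emph{unique} fully $\F$-centralized member of its class, so $y$ is not fully centralized and no saturation axiom forces $C_S(y)$ to have full rank; the inequality $m(C_S(y))\le 4 < 5 = m(S)$ is therefore consistent and yields no contradiction by itself. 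Worse, the proposed fix is self-defeating: $\tilde\phi(C_T(y))$ is isomorphic to $C_T(y)$, hence has $2$-rank at most $4$ by your own computation, so it cannot contain a maximal (rank $5$) elementary abelian subgroup of $S$; there is no configuration to ``extract,'' and the suggested iteration does not change this.

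The missing idea is to exploit fusion, not rank. The paper's proof takes $f\in x^\F\cap(T-R)$, passes to the extension $\K\gen{f}$ (the fusion system of $\Aut(McL)$ or $\Aut(J_3)$), and uses the structure of $C_{\K\gen{f}}(f)\cong\gen{f}\times\F_2(M_{11})$ or $\gen{f}\times\F_2(L_2(17))$ --- precisely the centralizer whose small rank you computed --- to find a four-group $V\le C_{T_1}(f)$ with $f$ $\F$-conjugate to \emph{every} element of $fV$. Transporting $V$ through an extension (via the extension axiom) of a morphism $\gen{f}\to\gen{x}$ shows $x$ is $\F$-conjugate to every element of $xV^\alpha$; since $|T:R|=2$ forces $V^\alpha\cap R\neq 1$, this produces a conjugate of $x$ in $R$ distinct from $x$, contradicting Lemma~\ref{L:xwcR}. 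If you want to salvage your argument, this is the step to supply: use the internal fusion of $M_{11}$ or $L_2(17)$ (together with the conjugacy $f\sim ft_1$) to manufacture new conjugates of $x$ landing back inside $R$, rather than trying to contradict the rank bound.
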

\begin{proof}
Assume $T = S$. Then $R < T$ by Lemma~\ref{L:TneqS}.  Fix $f \in x^\F \cap
(T-R)$. The extension $\K\gen{f} := \K T_1\gen{f}$ of $\K$ is defined by
\cite[\S 8]{AschbacherGeneralized}, and $\K\gen{f}/Q$ is the $2$-fusion system
of $\Aut(McL)$ or $\Aut(J_3)$
by Theorem~\ref{T:tame}. Thus, $|T:R| = 2$ by Lemma~\ref{L:sporautprelim}(b).

Conjugating in $\K\gen{f}$ if necessary, we may assume $\gen{f}$ is fully
$\K\gen{f}$-centralized.  By Lemma~\ref{L:sporautprelim}(b), all involutions of
$T_1\gen{f}-T_1$ are $\K\gen{f}$-conjugate, and $C_{\K\gen{f}}(f) \cong \gen{f}
\times \F_2(M_{11})$ or $\gen{f} \times \F_2(L_2(17))$. 
In particular, $C_{T_1}(f)$ is semidihedral or dihedral, respectively, of order
$16$ and with center $\gen{t_1}$. Fix a four subgroup $V \leq C_{T_1}(f)$.
Then $f$ is conjugate to $ft_1$ (for example, by an element in the normalizer
of $C_{T_1\gen{f}}(f)$ in $T_1\gen{f}$), and hence is $\F$-conjugate to each
element of $fV$ by the structures of $\F_2(M_{11})$ and $\F_2(L_2(17))$.

Fix $\alpha \in \Hom_{\F}(\gen{f},\gen{x})$. By the extension axiom, $\alpha$
extends to a morphism, which we also call $\alpha$, defined on $\gen{f}V \leq
C_{T}(f)$.  Therefore, $x$ is $\F$-conjugate to each element in $xV^\alpha$.
Now the intersection $xV^\alpha \cap R = V^\alpha \cap R$ is nontrivial because
$|T:R| = 2$, so as $x$ is not itself in $V^\alpha$, we see that $x$ has a
distinct conjugate in $R$. This contradicts Lemma~\ref{L:xwcR} and completes
the proof.
\end{proof}

\section{Proof of Theorem~\ref{T:main}}

Continue the notation and hypotheses set at the beginning of
Section~\ref{S:prelim}.  In addition, we fix $F \in \A(T_1)$ satisfying
assumptions (a)-(c) of Lemma~\ref{L:homocyclic}, as guaranteed by
Lemma~\ref{L:sporprelim}(c), and set $E := \gen{x}F$. 
Then $E \in \A(T)$ by Lemma~\ref{L:centT1}(a), and so
\begin{eqnarray}
\label{E:mt5}
m(T) = 5.
\end{eqnarray}

In this section, we finish the proof of Theorem~\ref{T:main}, by showing that
the hypotheses of Lemma~\ref{L:homocyclic} hold for a model of the normalizer
in $\F$ of an appropriate $\F$-conjugate of $E$. Via Lemma~\ref{L:H1}(d), this
forces the 2-rank of $S$ to be at least $8$, contrary to the hypothesis that
$m(S) = m(T)$. 

By Lemmas~\ref{L:TneqS} and \ref{L:McL-cent},
\[
T < S.
\]

\begin{lemma}\label{L:xsimxt1}
$|\Aut_\F(T):\Aut_\C(T)| = 2$.
\end{lemma}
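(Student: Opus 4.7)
The plan is to identify $\Aut_\C(T)$ as the stabilizer of $x$ under the natural action of $\Aut_\F(T)$ on $T$: since $x$ is the unique involution of the cyclic group $\gen{x}$, an $\F$-automorphism of $T$ lies in $\Aut_\C(T)$ if and only if it fixes $x$. Hence the desired index equals the size of the orbit $x^{\Aut_\F(T)}$. Every element of $\Aut_\F(T)$ preserves $Z(T)$ and sends $x$ to an $\F$-conjugate, so this orbit is contained in $x^\F \cap \Omega_1(Z(T))$.

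The next step is to pin down $\Omega_1(Z(T))$. Any element of $Z(T)$ centralizes $T_1 \le T$ and so lies in $C_T(T_1) = Q\gen{t_1}$ by Lemma~\ref{L:centT1}(b); because $Q$ is cyclic with unique involution $x$, this forces $\Omega_1(Z(T)) \le \gen{x,t_1}$. Conversely, $x \in Z(T)$ by construction, and by the previous section we have $T < S$, so $t_1 \in \Omega_1(Z(S)) \le Z(T)$ via Lemma~\ref{L:rank-Z(S)}. Therefore $\Omega_1(Z(T)) = \gen{x,t_1}$.

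Given this, Lemma~\ref{L:xnsimt1}(a) rules out $x$ being $\F$-conjugate to $t_1$, so the only candidates in $\gen{x,t_1}$ for $\F$-conjugates of $x$ are $x$ and $xt_1$, giving $|x^{\Aut_\F(T)}| \le 2$. The matching lower bound is immediate from Lemma~\ref{L:xnsimt1}(b), which (because $T < S$) produces $a \in N_S(T)$ with $x^a = xt_1$; then $c_a \in \Aut_\F(T) \setminus \Aut_\C(T)$, so the orbit has size exactly $2$. I expect the only step requiring any thought to be the inclusion $\Omega_1(Z(T)) \le \gen{x,t_1}$, which rests on the identification $C_T(T_1) = Q\gen{t_1}$ from Lemma~\ref{L:centT1}(b); the rest is short bookkeeping over the lemmas already in place.
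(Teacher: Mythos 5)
Your proof is correct and follows essentially the same route as the paper's (one-line) argument: represent $\Aut_\F(T)$ on $\Omega_1(Z(T))=\gen{x,t_1}$, identify $\Aut_\C(T)$ as the stabilizer of $x$, and apply Lemma~\ref{L:xnsimt1} together with $T<S$ to get an orbit of size exactly $2$. You simply supply the bookkeeping (the computation of $\Omega_1(Z(T))$ via Lemmas~\ref{L:centT1}(b) and \ref{L:rank-Z(S)}) that the paper leaves implicit.
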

\begin{proof}
Represent $\Aut_\F(T)$ on $\Omega_1Z(T) = \gen{x, t_1}$ and apply
Lemma~\ref{L:xnsimt1}.
\end{proof}

\begin{lemma}
\label{L:notBaum}
The following hold.
\begin{enumerate}
\item[(a)] $|\Aut_\F(J(T)):\Aut_\C(J(T))| = 4$ and 
\item[(b)] $x^{\Aut_\F(E)} = xF$, and so $|\Aut_\F(E):\Aut_\C(E)| = 16$.
\end{enumerate}
\end{lemma}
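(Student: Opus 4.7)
The plan is to compute both indices as orbit sizes. The action of $\Aut_\F(P)$ on the involutions of $P$ (with $P = J(T)$ or $E$) has the following key feature: any $\phi \in \Aut_\F(P)$ fixing $x$ extends trivially (by itself) to $\gen{x}P = P$ fixing $x$, so $\phi \in \Aut_\C(P)$ by the definition of $\C = C_\F(x)$. Hence the stabilizer of $x$ in $\Aut_\F(P)$ is exactly $\Aut_\C(P)$, and the index $|\Aut_\F(P):\Aut_\C(P)|$ equals the orbit size $|x^{\Aut_\F(P)}|$.

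For (a), $Z(J(T)) = \gen{x} \times Z(T_0) = \gen{x, t_1, t_2}$ is elementary abelian of rank $3$. By Lemma~\ref{L:sporprelim}(d), $\Aut_\C(J(T))$ contains $\Aut_K(T_0)$ (acting trivially on $\gen{x}$) and is therefore transitive on $\{t_1, t_2, t_1 t_2\}$. Lemma~\ref{L:xnsimt1}(a) gives $x \notin t_1^\F$, so $x^{\Aut_\F(J(T))} \subseteq \{x, xt_1, xt_2, xt_1 t_2\}$. For the reverse inclusion, Lemma~\ref{L:xnsimt1}(b) (valid because $T < S$) produces $s \in N_S(T) \leq N_S(J(T))$ with $x^s = xt_1$; composing its restriction with the $\Aut_\C(J(T))$-action on $\gen{t_1, t_2}^\#$ recovers all four elements, so $|x^{\Aut_\F(J(T))}| = 4$.

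For (b), the involutions of $E = \gen{x} \times F$ decompose as $F^\# \sqcup xF$. By Lemma~\ref{L:sporprelim}(c) and Lemma~\ref{L:H1}(a), $\Aut_\C(E)$ contains $\Aut_K(F)$ acting transitively on $F^\#$, placing $F^\# \subseteq t_1^\F$; together with Lemma~\ref{L:xnsimt1}(a) this forces $x^{\Aut_\F(E)} \subseteq xF$. To obtain equality it suffices to exhibit one $\phi \in \Aut_\F(E)$ with $x^\phi \neq x$, because $\Aut_K(F)$ fixes $x$ and is transitive on $F^\#$, so its action will then sweep the orbit of $x$ over all of $xF$.

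The main obstacle is that the $\F$-automorphism of $J(T)$ realizing $x \sim xt_1$ may not normalize $E$; it could swap $E$ with the other member $E' := \gen{x}F'$ of $\A(J(T))$, where $F'$ is the second element of $\A(T_1)$. I plan to circumvent this via an index comparison using part (a). Since $\Aut_\F(J(T))$ permutes $\{E, E'\}$, the stabilizer of $E$ in $\Aut_\F(J(T))$ has index at most $2$; if it were contained in $\Aut_\C(J(T))$, then $|\Aut_\F(J(T)):\Aut_\C(J(T))| \leq 2$, contradicting (a). Hence some $\phi \in \Aut_\F(J(T))$ fixes $E$ yet moves $x$, and its restriction to $E$ supplies the required automorphism, completing (b).
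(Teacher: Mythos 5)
Your proof is correct, and its overall strategy coincides with the paper's: identify $\Aut_\C(P)$ with the stabilizer $C_{\Aut_\F(P)}(x)$ so that each index is an orbit length, bound the orbit of $x$ from above using Lemma~\ref{L:xnsimt1}(a) together with the transitivity statements from Lemma~\ref{L:sporprelim}, and then exhibit enough conjugates of $x$ to force equality. Part (a) is essentially identical to the paper's argument. The one genuine divergence is the finishing move in (b). The paper first upgrades the index $4$ from (a) to the Sylow-level statement $|N_S(J(T)):T| = 4$ (this is where Lemma~\ref{L:Jfn}, i.e.\ the fact that $J(T)$ is fully $\F$-normalized, is used), and then lets $N_S(J(T))$ act on $\A(T) = \{E, E'\}$: the kernel has index at most $2$, hence is not contained in $T$, yielding an element of $N_S(E) - N_T(E)$ that moves $x$. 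You instead let the abstract group $\Aut_\F(J(T))$ act on $\{E, E'\}$ and compare the index of the stabilizer of $E$ (at most $2$) with the index of $\Aut_\C(J(T))$ (exactly $4$ by (a)) to produce an automorphism of $J(T)$ that preserves $E$ but moves $x$, then restrict it to $E$. Your version is slightly more economical: it uses only the group-theoretic index from (a) and dispenses with the fully normalized hypothesis on $J(T)$ and the Sylow computation. What the paper's route buys in exchange is the concrete conclusion $N_T(E) < N_S(E)$ inside $S$, though nothing later in the paper depends on having the conjugating element realized in $N_S(E)$ rather than in $\Aut_\F(E)$. Both arguments are valid.
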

\begin{proof}
Represent $\Aut_\F(J(T))$ on $\Omega_1Z(J(T)) = \gen{x,t_1,t_2}$.  Now
$\Aut_\C(J(T)) = C_{\Aut_{\F}(J(T)}(x)$, and the former is transitive on
$\Omega_1ZJ(T_1)^\#$ by Lemma~\ref{L:sporprelim}(d). Also, since $x$ is
$N_S(T)$-conjugate to $xt_1$, we conclude from Lemma~\ref{L:xnsimt1} that
$x^{\Aut_\F(J(T))} = xZJ(T_1)$ is of size $4$. Thus, (a) holds. 

Similarly to (a), we have $\Aut_\C(E) = C_{\Aut_\F(E)}(x)$, and the former is
transitive on $F^\#$ by choice of $F$. From Lemma~\ref{L:centT1}(a) and
Lemma~\ref{L:sporprelim}(a), $|\A(T)| = 2$. By part (a) and Lemma~\ref{L:Jfn},
$|N_S(J(T)):T| = 4$. Representing $N_S(J(T))$ on $\A(T)$, we see that the
kernel has index at most $2$, so there is an element of $N_S(J(T))-T$ that
normalizes $E$.  In particular, $N_T(E) < N_S(E)$, and so $x$ is
$\Aut_\F(E)$-conjugate to a member of $xF^\#$. Now by choice of $F$, another
appeal to Lemma~\ref{L:xnsimt1} yields that $x^{\Aut_\F(E)} = Fx$ has size 16,
which establishes (b).
\end{proof}

\begin{lemma}\label{L:Q=x}
The following hold:
\begin{enumerate}
\item[(a)] $Q = \gen{x}$.
\item[(b)] $E$ is $\F$-centric.
\end{enumerate}
\end{lemma}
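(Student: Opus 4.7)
The plan is to prove (a) by a Frattini/extension-axiom contradiction and then to deduce (b) immediately. First I would pin down $C_S(E)$: since $x \in E$, $C_S(E) \leq C_S(x) = T$; as $x \in Z(T)$, $C_T(E) = C_T(F)$; by \eqref{E:KT/Q}, each $\tau \in T - R$ induces a nontrivial outer automorphism of $K$, which by Lemma~\ref{L:sporautprelim}(c) cannot centralize $F \in \A(T_1)$; so $C_T(F) \leq R$. Combined with $F$ being maximal abelian self-centralizing in $T_1$, this yields $C_S(E) = C_R(F) = Q \times F$.

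For (a), I would argue by contradiction. Suppose $|Q| \geq 4$ and pick $q \in Q$ with $q^2 = x$. The abelian group $QF \cong C_{|Q|} \times E_{2^4}$ satisfies $\Omega_1(\mho^1(QF)) = \Omega_1(\mho^1(Q)) = \gen{x}$, so $\gen{x}$ is characteristic in $QF$. Fix any $f \in F$ with $f \neq 1$. By Lemma~\ref{L:notBaum}(b), there is $\alpha \in \Aut_\F(E)$ with $\alpha(x) = xf$. The extension axiom applied to $\alpha$ (with $E$ taken fully $\F$-centralized, after a preliminary $\F$-conjugation if necessary) extends $\alpha$ to $\tilde{\alpha}$ on $C_S(E) = QF \subseteq N_{\alpha}$. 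Since $\tilde{\alpha}(E) = E$, every element of $QF = C_S(E)$ is mapped by $\tilde{\alpha}$ into $C_S(\tilde{\alpha}(E)) = C_S(E) = QF$, so $\tilde{\alpha}$ restricts to an automorphism of $QF$. Characteristicity of $\gen{x}$ then forces $\tilde{\alpha}(x) = x$, contradicting $\tilde{\alpha}(x) = \alpha(x) = xf \neq x$. Hence $Q = \gen{x}$.

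For (b), the identity $C_S(E) = \gen{x}F = E$ furnished by (a) shows $E$ is self-centralizing in $S$. Since every $E' \in E^\F$ is elementary abelian of rank $m(S) = 5$, one has $|C_S(E')| \geq |E'| = 32 = |C_S(E)|$; comparing with a fully $\F$-centralized conjugate (whose centralizer is maximal) forces $|C_S(E')| = 32$ throughout, so $C_S(E') = E'$ and $E$ is $\F$-centric. The main technical obstacle lies in (a): the extension axiom strictly requires the codomain $E$ of $\alpha$ to be fully $\F$-centralized, which is not built into the setup in Section~\ref{S:prelim}. This will be resolved by the observation that the assertion $|Q| = 2$ is invariant under $\F$-conjugation of the triple $(x, F, E)$, allowing a reduction to a fully centralized $\F$-conjugate before running the Frattini argument.
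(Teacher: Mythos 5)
Your reduction of the problem to the structure of $C_S(E)=QF$ and your idea for (a) --- that if $|Q|\ge 4$ then $\gen{x}=\Omega_1\mho^1(C_S(E))$ is characteristic in $C_S(E)$, which is incompatible with $\Aut_\F(E)$ moving $x$ (Lemma~\ref{L:notBaum}(b)) --- is sound and genuinely different from the paper's argument, which instead conjugates a square root $w$ of $x$ in $Q$ by an element $a\in N_S(T)-T$ and uses the normality of $\gen{w}$ and $T_1$ in $T$ together with $C_T(T_1)=Q\gen{t_1}$. However, your execution of (a) has a gap exactly where you flag it. You propose to justify the use of the extension axiom by passing to a fully $\F$-centralized conjugate $E'$ of $E$, asserting that ``$|Q|=2$ is invariant under $\F$-conjugation of the triple.'' But the argument needs more than invariance of an assertion: it needs that $C_S(E')$ is again of the form (cyclic)$\times$(elementary abelian of order $16$) with the cyclic factor squaring down onto $\gen{x'}$, and a priori $|C_S(E')|$ could be strictly larger than $|C_S(E)|$ with a different structure. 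Controlling $C_S(E')$ requires transporting it back into $T$, which is done by extending $(\alpha|_{\gen{x^\alpha}})^{-1}$ to a morphism $\beta\colon C_S(x^\alpha)\to T$ (legitimate because $\gen{x}$ \emph{is} fully centralized) and observing that $C_S(E^\alpha)^\beta$ lands in $C_T(E^{\alpha\beta})=QF_0$; this is precisely the paper's argument for (b), so your plan for (a) silently presupposes the hard part of (b). The clean repair is to avoid the extension axiom altogether: the proof of Lemma~\ref{L:notBaum}(b) produces an element $b\in N_S(E)-T$, and $c_b$ automatically restricts to an automorphism of $C_S(E)$, so characteristicity of $\gen{x}$ in $C_S(E)$ already contradicts $x^b\ne x$ with no fully-centralized hypothesis needed.

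Part (b) as written is a non sequitur. You establish $|C_S(E')|\ge 32$ for every $E'\in E^\F$ and then claim that ``comparing with a fully $\F$-centralized conjugate (whose centralizer is maximal) forces $|C_S(E')|=32$ throughout.'' A uniform lower bound together with the existence of a conjugate realizing the maximum gives no upper bound at all: the fully centralized conjugate $E''$ could have $|C_S(E'')|>32$ (for instance $C_S(E'')\cong C_4\times E_{16}$ is not excluded by $m(S)=5$). One must prove $|C_S(E'')|\le 32$ separately, and this is where the $\beta$-map above is indispensable: $C_S(E'')^\beta\le C_T(E''^\beta)$, and $E''^\beta$ is one of the two rank-$5$ elementary abelian subgroups of $T$, whose centralizer in $S$ is itself by part (a). Without that step, (b) is not proved.
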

\begin{proof}
Suppose on the contrary that $Q > \gen{x}$ and choose $w \in Q$ with $w^2 = x$.
Fix $a \in N_S(T)-T$ such that $a^2  \in T$. Then $x^a = xt_1$ and also
$(w^a)^2 = xt_1$. Further $\gen{w^a}$ is normal in $T$ since $\gen{w}$ is.
Thus, $[\gen{w^a}, T_1] \leq \gen{w^a} \cap T_1 = 1$, whereas $C_{T}(T_1) =
Q\gen{t_1}$ by Lemma~\ref{L:centT1}(b). It follows that $\gen{xt_1} =
\mho^1(\gen{w^a}) \leq \Omega_1\mho^1(Q\gen{t_1}) = \gen{x}$, a contradiction that
establishes (a). 

Let $E_0$ be one of the two elementary abelian subgroups of rank $5$ in $T$,
and set $F_0 = E_0 \cap J(T_1)$.  Then $E_0 = \gen{x}F_0$ contains $x$, and so
$C_S(E_0) = C_T(E_0)$. By Lemma~\ref{L:sporautprelim}(c), $C_T(E_0) = C_R(E_0)
= QF_0$.  Hence $C_S(E_0) = QF_0 = E_0$ by part (a).

We can now prove (b). Fix $\alpha \in \mathfrak{A}(E)$.  Since $\gen{x}$ is
fully centralized, the restriction of $\alpha^{-1}$ to $\gen{x^\alpha}$ has an
extension $\beta\colon C_S(x^\alpha) \to C_S(x) = T$, which is defined on
$C_S(E^\alpha)$.  Thus, setting $E_0 := C_S(E^\alpha)^\beta \leq T$, we see
from the previous paragraph that $|C_S(E)| = |C_S(E_0)| = |E_0| =
|C_S(E^\alpha)|$, so that $C_S(E^\alpha) = E^\alpha$. As $E^\alpha$ is fully
$\F$-normalized and contains its centralizer in $S$, this means that $E$ is
$\F$-centric, as claimed.
\end{proof}

Since we will be working in $N_\F(E)$ for the remainder, we may assume, after
replacing $E$ by an $\F$-conjugate if necessary, that $E$ is fully
$\F$-normalized. Hence $E \in \F^{fc}$ by Lemma~\ref{L:Q=x}(b). Fix a model $H$
for $N_\F(E)$ (cf. \S\S\ref{SS:model}).

\begin{lemma}\label{L:punchline}
$H$ satisfies the hypotheses of Lemma~\ref{L:homocyclic}.
\end{lemma}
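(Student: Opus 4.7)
The plan is to read off the data of Lemma~\ref{L:homocyclic} from what has already been established and then verify each structural requirement in turn. Setting $V := F$, the group $E = \gen{x}F$ is elementary abelian of rank $5 = n+1$ with $n = 4 \geq 3$, and $E = \gen{x} \oplus V$. In $A = \Aut(E)$, the hyperplane stabilizer $P = N_A(V)$ decomposes as $V \rtimes GL(V)$ with $U := O_2(P) = V$ (the translations of $E$ fixing $V$ pointwise) and $L := GL(V) \cong GL_4(2)$ a natural complement acting decomposably on $E$, with $x$ the unique nontrivial $L$-fixed point in $E - V$. Taking $G := \Aut_K(F) \leq L$, conditions (a)--(c) of Lemma~\ref{L:homocyclic} for $(G,V)$ are exactly Lemma~\ref{L:sporprelim}(c).

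Next I would show $\Aut_\F(E) \leq P$, i.e.\ $\Aut_\F(E)$ normalizes $F$. By Lemma~\ref{L:notBaum}(b), $x^\F \cap E = xF$; and $F^\#$ is contained in $t_1^\F \cap E$ since every element of $F^\#$ is $K$-conjugate to $t_1$. These subsets of $E^\#$ are disjoint by Lemma~\ref{L:xnsimt1}(a), and together exhaust $E^\#$. Thus $F = \{0\} \cup (E^\# - xF)$ is $\Aut_\F(E)$-invariant, so $H/E = \Aut_\F(E) \leq P$.

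The heart of the proof is exhibiting $UG \leq H/E$ with the prescribed action on $E$. The $G$-summand is immediate: by Theorem~\ref{T:tame}, $\C$ is tamely realized by a finite group $C$ with $F^*(C) = \gen{x}K$, and conjugation by $N_K(F) \leq C$ embeds $G = \Aut_K(F)$ into $\Aut_\C(E) \leq \Aut_\F(E)$ acting on $E$ as a subgroup of $L$ fixing $x$. For the translation summand $U$, Lemma~\ref{L:notBaum}(b) gives $|\Aut_\F(E):\Aut_\C(E)| = 16$, so for each $f \in F$ one has $\beta_f \in \Aut_\F(E)$ with $\beta_f(x) = xf$. Writing $\beta_f = \phi_f l_f$ in $P = U \rtimes L$, the pure translation $\phi_f \in U$ lies in $\Aut_\F(E)$ as soon as $l_f \in \Aut_\F(E) \cap L = \Aut_\C(E)$. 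Using the transitivity of $G$ on $F^\#$ (hypothesis (a)) one reduces by a $G$-conjugation to the case $f = t_1$, and then the element $s \in N_S(T) - T$ furnished by Lemma~\ref{L:xnsimt1}(b) satisfies $x^s = xt_1$; its action on $E$ decomposes as $\phi_{t_1}$ times an element of $L$ whose restriction to $F$ agrees with $s|_F$, which lies in $G$. Carrying $\phi_{t_1}$ around by $G$ then yields $\phi_f$ for every $f \in F$, so $U \leq \Aut_\F(E)$ and consequently $UG \leq H/E$ with the required action.

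The main obstacle is this last extraction step: controlling the image of the restriction map $\Aut_\F(E) \to \Aut(F)$ and showing that it is no larger than $G = \Aut_K(F)$, so that the $L$-components $l_f$ indeed lie in $\Aut_\C(E)$. Once $UG \leq H/E$ is in hand, the preimage $H_0 \leq H$ of $UG$ under $H \to H/E$ is an extension of $E$ by $UG$ carrying the decomposable action specified by Lemma~\ref{L:homocyclic}; applying that lemma to $H_0$ produces the rank-$8$ elementary abelian subgroup of $S$ that contradicts $m(S) = m(T) = 5$ via Lemma~\ref{L:H1}(d).
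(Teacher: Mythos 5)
Your setup is fine and matches the paper through the point where you show $\Aut_\F(E)\leq P=N_{\Aut(E)}(F)$: the orbit computation $x^{\Aut_\F(E)}=xF$ from Lemma~\ref{L:notBaum}(b) together with Lemma~\ref{L:xnsimt1}(a) does force $F$ to be $\Aut_\F(E)$-invariant, and Lemma~\ref{L:sporprelim}(c) supplies hypotheses (a)--(c) for $(G,F)$. The problem is the step you yourself flag as ``the main obstacle'': producing the translation subgroup $U$ inside $\Aut_\F(E)$. Your argument writes $\beta_{t_1}=\phi_{t_1}l_{t_1}$ in $P=U\rtimes L$ and needs $l_{t_1}\in\Aut_\C(E)=\Aut_\F(E)\cap L$, which you justify by asserting that $c_s|_F$ lies in $G=\Aut_K(F)$ for your chosen $s\in N_S(T)-T$. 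No reason is given for this, and it cannot be obtained for free: since $|\Aut_\F(E):\Aut_\C(E)|=16=|U|$, the statement ``the $L$-component of every element of $\Aut_\F(E)$ lies in $\Aut_\C(E)$'' is \emph{equivalent} to $U\leq\Aut_\F(E)$, so your extraction step assumes what it is meant to prove. (It is also not quite the right target: $\Aut_\C(E)$ may exceed $G$ by an outer automorphism of $K$ when $K\cong McL$ or $J_3$, so the image of the restriction map in $GL(F)$ need not be contained in $G$.) A secondary gap: the element $s\in N_S(T)-T$ with $x^s=xt_1$ from Lemma~\ref{L:xnsimt1}(b) need not normalize $E$ at all --- it could interchange the two members of $\A(T)$; the paper has to work inside $N_S(J(T))$ and argue that the kernel of the action on $\A(T)$ has index at most $2$ to find an element outside $T$ normalizing $E$ (this is done in the proof of Lemma~\ref{L:notBaum}(b)).

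The paper closes the gap by an argument you will want to adopt: $\Aut_\F(E)\cap U$ is normalized by $G\leq\Aut_\F(E)$, and $G$ is transitive on $U^\#$ because $[x,-]$ identifies $U$ with $F$ as $G$-modules; hence $\Aut_\F(E)\cap U$ is trivial or all of $U$. If it were trivial, $\Aut_\F(E)$ would embed into $GL(F)\cong GL_4(2)$ as an overgroup of $G$ of index $16$ or $32$ over $G$ (using $|\Aut_\F(E):\Aut_\C(E)|=16$ and $|\Aut_\C(E):G|\leq 2$), which is impossible: $A_7$ has index $8$ in $GL_4(2)$, and the overgroups of $GL_2(4)$ in $GL_4(2)$ do not admit such indices. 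This global counting argument replaces the element-by-element extraction you attempted and is the essential content missing from your proposal.
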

\begin{proof}
Set $\tilde{G} = \Aut_\C(E)$ and observe that $\Aut_\F(E) \cong H/E$ by
Lemma~\ref{L:Q=x}(b). Thus $\tilde{G}$ contains $G := A_7$ or $GL_2(4)$ with
index $1$ or $2$. As $G$ acts transitively on $F^\#$ and centralizes $x$, it
follows from Lemma~\ref{L:notBaum}(b) that $xF$ and $F^\#$ are the orbits of
$\Aut_\F(E)$ on $E^\#$.  Hence $\Aut_\F(E) \leq N_{\Aut(E)}(F)$, a nontrivial
split extension of an elementary abelian $2$-group $U$ of order $16$ by $GL(F)$
with the standard action.

We claim that $U \leq \Aut_\F(E)$. Suppose that this is not the case. Now $G$
acts transitively on $F^\#$ and the commutator map $[x,-]$ defines an
isomorphism of $G$-modules from $U$ to $F$, so $G$ acts transitively on $U^\#$.
Since $U$ is normalized by $\Aut_\F(E)$, we see that $\Aut_\F(E) \cap U = 1$.
In particular, $\Aut_\F(E)$ embeds into $GL(F)$. Now $G \cong A_7$ or $GL_2(4)$
in the cases under consideration, and by Lemma~\ref{L:notBaum}(b), $\Aut_\F(E)$
is therefore a subgroup of $GL(F)$ containing $G$ with index $16$ or $32$.
However, $A_7$ has index $8$ in $GL_4(2)$, and $GL_2(4)$ is contained with
index 2 in a unique maximal subgroup of $GL_4(2)$, a contradiction. Therefore,
$U \leq \Aut_\F(E)$ as claimed. 

It has thus been shown that $\Aut_\F(E)$ contains a subgroup with index $1$ or
$2$ that is a split extension of $U = O_2(N_{\Aut(E)}(F))$ by $G$. Thus, $H$
has a subgroup of index $1$ or $2$ that is an extension of $E$ by $UG$.
Assumptions (a)-(c) of Lemma~\ref{L:homocyclic} hold via
Lemma~\ref{L:sporprelim}(c) by the choice of $F$.
\end{proof}

\begin{proof}[Proof of Theorem~\ref{T:main}]
Keep the notation of the proof of Lemma~\ref{L:punchline}. By that lemma and
Lemma~\ref{L:homocyclic}, there is a $G$-complement $Y$ to $\gen{x}$ in
$O_2(H)$ that is homocyclic of order $2^8$ with $\Omega_1(Y) = F$, or
elementary abelian of order $2^8$.  Now $G$ is isomorphic to $A_7$ or $GL_2(4)$
with faithful action on $F$, so the former case is impossible by
Lemma~\ref{L:H1}. Hence, $m_2(T) = 5 < 8 \leq m_2(S)$, contrary to hypothesis.
\end{proof}

\bibliographystyle{amsalpha}{}
\bibliography{/home/cpsmth/s05jl6/work/math/research/mybib}

\def\cprime{$'$}
\providecommand{\bysame}{\leavevmode\hbox to3em{\hrulefill}\thinspace}
\providecommand{\MR}{\relax\ifhmode\unskip\space\fi MR }
% \MRhref is called by the amsart/book/proc definition of \MR.
\providecommand{\MRhref}[2]{%
  \href{http://www.ams.org/mathscinet-getitem?mr=#1}{#2}
}
\providecommand{\href}[2]{#2}
\begin{thebibliography}{AKO11}

\bibitem[AG72]{AlperinGorenstein1972}
J.~L. Alperin and Daniel Gorenstein, \emph{A vanishing theorem for cohomology},
  Proc. Amer. Math. Soc. \textbf{32} (1972), 87--88. \MR{0291293}

\bibitem[AKO11]{AschbacherKessarOliver2011}
Michael Aschbacher, Radha Kessar, and Bob Oliver, \emph{Fusion systems in
  algebra and topology}, London Mathematical Society Lecture Note Series, vol.
  391, Cambridge University Press, Cambridge, 2011. \MR{2848834}

\bibitem[AOV12]{AOV2012}
Kasper K.~S. Andersen, Bob Oliver, and Joana Ventura, \emph{Reduced, tame and
  exotic fusion systems}, Proc. Lond. Math. Soc. (3) \textbf{105} (2012),
  no.~1, 87--152. \MR{2948790}

\bibitem[Asc00]{AschbacherFGTSecond}
Michael Aschbacher, \emph{Finite group theory}, second ed., Cambridge Studies
  in Advanced Mathematics, vol.~10, Cambridge University Press, Cambridge,
  2000. \MR{1777008 (2001c:20001)}

\bibitem[Asc11]{AschbacherGeneralized}
\bysame, \emph{The generalized {F}itting subsystem of a fusion system}, Mem.
  Amer. Math. Soc. \textbf{209} (2011), no.~986, vi+110. \MR{2752788}

\bibitem[Asc15]{AschbacherICCM}
\bysame, \emph{Classifying finite simple groups and 2-fusion systems}, ICCM
  Not. \textbf{3} (2015), no.~1, 35--42. \MR{3385504}

\bibitem[Asc16]{AschbacherFSCT}
\bysame, \emph{On fusion systems of component type}, preprint (2016), 228pp.

\bibitem[BLO03]{BrotoLeviOliver2003}
Carles Broto, Ran Levi, and Bob Oliver, \emph{The homotopy theory of fusion
  systems}, J. Amer. Math. Soc. \textbf{16} (2003), no.~4, 779--856
  (electronic).

\bibitem[Cra11]{CravenTheory}
David~A. Craven, \emph{The theory of fusion systems}, Cambridge Studies in
  Advanced Mathematics, vol. 131, Cambridge University Press, Cambridge, 2011,
  An algebraic approach. \MR{2808319}

\bibitem[Fin76a]{FinkelsteinM23}
Larry Finkelstein, \emph{Finite groups with a standard component isomorphic to
  {$M_{23}$}}, J. Algebra \textbf{40} (1976), no.~2, 541--555. \MR{0414700 (54
  \#2795)}

\bibitem[Fin76b]{FinkelsteinHJ}
\bysame, \emph{Finite groups with a standard component isomorphic to {${\rm
  HJ}$} or {${\rm HJM}$}}, J. Algebra \textbf{43} (1976), no.~1, 61--114.
  \MR{0427450 (55 \#482)}

\bibitem[GL16]{GlaubermanLynd2016}
George Glauberman and Justin Lynd, \emph{Control of fixed points and existence
  and uniqueness of centric linking systems}, Invent. Math. \textbf{206}
  (2016), no.~2, 441--484.

\bibitem[GLS98]{GLS3}
Daniel Gorenstein, Richard Lyons, and Ronald Solomon, \emph{The classification
  of the finite simple groups. {N}umber 3. {P}art {I}. {C}hapter {A}},
  Mathematical Surveys and Monographs, vol.~40, American Mathematical Society,
  Providence, RI, 1998, Almost simple $K$-groups. \MR{1490581 (98j:20011)}

\bibitem[Hig68]{Higman1968}
Graham Higman, \emph{Odd characterizations of finite simple groups}, Lecture
  Notes of University of Michigan. Ann Arbor (1968), 76pp.

\bibitem[Lyn15]{Lynd2015}
Justin Lynd, \emph{A characterization of the 2-fusion system of {$L_4(q)$}}, J.
  Algebra \textbf{428} (2015), 315--356. \MR{3314296}

\bibitem[Oli13]{Oliver2013}
Bob Oliver, \emph{Existence and uniqueness of linking systems: {C}hermak's
  proof via obstruction theory}, Acta Math. \textbf{211} (2013), no.~1,
  141--175. \MR{3118306}

\bibitem[Oli16a]{OliverReductions}
\bysame, \emph{Reductions to simple fusion systems}, Bulletin of the London
  Mathematical Society \textbf{48} (2016), no.~6, 923--934.

\bibitem[Oli16b]{OliverTameSpor}
\bysame, \emph{Tameness of fusion systems of sporadic simple groups}, preprint
  (2016), 29pp., arXiv:1604.05681v2.

\bibitem[Wil84]{Wilson1984}
Robert~A. Wilson, \emph{The subgroup structure of the {L}yons group}, Math.
  Proc. Cambridge Philos. Soc. \textbf{95} (1984), no.~3, 403--409. \MR{755827}

\end{thebibliography}
\end{document}